\theoremstyle{definition}
\newtheorem{theorem}{Theorem}[section]
\newtheorem*{theorem*}{Theorem}
\newtheorem{definition}[theorem]{Definition}
\newtheorem*{definition*}{Definition}
\newtheorem{lemma}[theorem]{Lemma}
\newtheorem*{lemma*}{Lemma}
\newtheorem*{example*}{Example}
\newtheorem{proposition}[theorem]{Proposition}
\newtheorem*{proposition*}{Proposition}
\newtheorem*{corollary*}{Corollary}
\newtheorem{maintheorem}{Theorem}
\newcommand{\ctext}[1]{\raise0.2ex\hbox{\textcircled{\scriptsize{#1}}}}
\DeclareMathOperator{\Spin}{Spin}
\DeclareMathOperator{\pr}{pr}
\DeclareMathOperator{\Id}{Id}
\DeclareMathOperator{\tr}{tr}
\DeclareMathOperator{\Ric}{Ric}
\DeclareMathOperator{\scal}{scal}
\DeclareMathOperator{\vol}{vol}
\DeclareMathOperator{\G_2}{G_2}
\DeclareMathOperator{\SU}{SU}
\DeclareMathOperator{\U}{U}
\DeclareMathOperator{\Sp}{Sp}
\DeclareMathOperator{\End}{End}
\DeclareMathOperator{\Sym}{Sym}
\def\C{\mathbb{C}}
\def\R{\mathbb{R}}
\def\P{\mathbb{P}}
\def\F{\mathbb{F}}
\title{Rarita-Schwinger fields on nearly K\"{a}hler manifolds}
\author{Soma Ohno and Takuma Tomihisa}
\address{Soma Ohno, Department of Pure and applied Mathematics, Graduate school of fundamental science
and engineering, Waseda University, 3-4-1 Ohkubo, Shinjuku-ku, Tokyo, 169-8555, Japan.}
\email{runhorse@fuji.waseda.jp}
\address{Takuma Tomihisa, Department of Applied Mathematics, Faculty of science and engineering, Waseda University, 3-4-1 Ohkubo, Shinjuku-ku, Tokyo, 169-8555, Japan.}
\email{taku-tomihisa@akane.waseda.jp}
\begin{document}

\begin{abstract}
We study Rarita-Schwinger fields on 6-dimensional compact strict nearly K\"{a}hler manifolds. In order to investigate them, we clarify the relationship between some differential operators for the Hermitian connection and the Levi-Civita connection. As a result, we show that the space of the Rarita-Schwinger fields coincides with the space of the harmonic 3-forms. Applying the same technique to a deformation theory, we also find that the space of the infinitesimal deformations of Killing spinors coincides with the direct sum of a certain eigenspace of the Laplace operator and the space of the Killing spinors.
\end{abstract}

\maketitle

\makeatletter
  \renewcommand{\theequation}{
  \thesection.\arabic{equation}}
 \@addtoreset{equation}{section}
\makeatother

\section{Introduction}
Rarita-Schwinger fields are solutions of the Rarita-Schwinger equation. This equation is the relativistic field equation of spin-3/2 fermions, introduced by W. Rarita and J. Schwinger \cite{RaritaSchwinger}. 
We can consider Rarita-Schwinger fields as sections in the kernel of the Rarita-Schwinger operator, a generalization of the classical Dirac operator for spin-1/2 fields, with divergence free. In physics, especially in relativity and superstring theory, Rarita-Schwinger fields play an important role because, for example, they describe the gravitino, the supersymmetric partner of the hypothesized graviton. On the other hand, in mathematics, there have been some studies about Rarita-Schwinger fields: Wang \cite{Wang} examined the relationship between these fields and infinitesimal Einstein deformations with Killing spinors. The Rarita-Schwinger operator is important in association with elliptic genus (cf. \cite{Witten}). The spectrum of the Rarita-Schwinger operator on some concrete symmetric spaces was computed by Homma and Tomihisa \cite{YasushiTomihisa}. B\"{a}r and Mazzeo \cite{BarMazzeo} proved that there exists a compact manifold with many Rarita-Schwinger fields in any given dimension $n \geq 4$. In particular, if $n$ is divisible by 4, then we can take this manifold as a simply connected compact manifold with negative Einstein constant. Homma and Semmelmann \cite{YasushiSemmelmann} classified the manifolds with parallel Rarita-Schwinger fields. They also find some Einstein manifolds with positive Einstein constant admitting Rarita-Schwinger fields, quaternionic K\"{a}hler manifolds, symmetric spaces, and some algebraic manifolds. Then, it is interesting to find an Einstein manifold with positive Einstein constant which has Rarita-Schwinger fields.

In this paper, we investigate whether there exist (non-trivial) Rarita-Schwinger fields on nearly K\"{a}hler 6-manifolds, which have positive Einstein constant. This solves one of the open questions which Homma and Semmelmann \cite{YasushiSemmelmann} proposed. Nearly K\"{a}hler manifolds were first studied by A. Gray \cite{Gray2}. A nearly K\"{a}hler manifold is characterized as an almost Hermitian manifold $(M,g,J)$ with $(\nabla_X J)X=0$ for any vector field $X$, where $\nabla$ is the Levi-Civita connection. The 6-dimensional case stands out because of the lowest dimension in which non-K\"{a}hler nearly K\"{a}hler manifolds appear, and because of the existence of real Killing spinors. Thus, 6-dimensional non-K\"{a}hler nearly K\"{a}hler manifolds are Einstein manifolds with positive scalar curvature. Furthermore, in this case, they have $\SU(3)$-structures.

We have three major ideas to study Rarita-Schwinger fields on nearly K\"{a}hler manifolds. The first  in Section \ref{Preliminalies} is to decompose the tangent bundle, the spinor bundle $S_{1/2}$, and $S_{3/2}$ by using the Killing spinors and the $\SU(3)$-structures. Here, in Section \ref{Preliminalies}, we also define Rarita-Schwinger fields and nearly K\"{a}hler manifolds. The second in Section \ref{differential operators} is to investigate relationships between various differential operators for the Levi-Civita connection and the Hermitian connection. The third in Section \ref{Rarita Schwinger fields on nearly Kahler} is to rewrite the Rarita-Schwinger equation into the simultaneous equation consisting of 2-forms and 3-forms by using these decompositions and relationships. As a result, we get the main result in Section \ref{Rarita Schwinger fields on nearly Kahler}. 

\begin{maintheorem} \label{thmA}
The space of the Rarita-Schwinger fields is isomorphic to the space of the harmonic 3-forms on 6-dimensional compact non-K\"{a}hler nearly K\"{a}hler manifolds.
\end{maintheorem}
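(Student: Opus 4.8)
The plan is to translate the entire problem from spinors into differential forms by means of a Killing spinor and the $\SU(3)$-structure, and then to recognize the resulting system as the defining equations of a harmonic $3$-form. By the decompositions of Section \ref{Preliminalies}, a fixed Killing spinor trivializes the relevant summands of $S_{1/2}$, and the induced splitting of $S_{3/2}$ identifies a section $\Psi \in \Gamma(S_{3/2})$ with a tuple of differential forms, namely (as anticipated in the introduction) a collection of $2$-forms and $3$-forms. The constraint that $\Psi$ lie in $S_{3/2}$, the kernel of Clifford multiplication inside $TM \otimes S_{1/2}$, becomes a set of algebraic relations among these form-components that cuts the tuple down to the genuinely free data.

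First I would write out the Rarita-Schwinger equation $Q\Psi = 0$ together with the divergence-free condition $\operatorname{div}\Psi = 0$ in terms of these form-components. Because the Rarita-Schwinger operator is assembled from the (Levi-Civita) Dirac operator twisted by $TM$ composed with Clifford projections, each term carries the Levi-Civita connection $\nabla$. The key move, supplied by Section \ref{differential operators}, is to replace $\nabla$ by the canonical Hermitian connection $\bar\nabla$ plus explicit torsion terms: since $\bar\nabla$ preserves the $\SU(3)$-structure and the Killing spinor is $\bar\nabla$-parallel, the Hermitian Dirac-type operators act block-diagonally on the form decomposition, while the difference terms are purely algebraic and are governed by the nearly K\"{a}hler identities relating $d\omega$, $d\psi^{\pm}$, and the fundamental forms. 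Feeding these substitutions into $Q\Psi = 0$ and $\operatorname{div}\Psi = 0$ converts the spinorial system into a coupled first-order system on the $2$-form and $3$-form components, in which $d$ and $d^{*}$ appear explicitly.

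The decisive step is to decouple this system and show that it collapses to $d\beta = 0$ and $d^{*}\beta = 0$ for the $3$-form component $\beta$, with all remaining components either determined by $\beta$ or forced to vanish. Here the positivity of the Einstein constant enters: the auxiliary $2$-form equations take the form $(\text{Laplace-type operator} + \text{positive curvature term})(\alpha) = 0$, whose only solution is $\alpha = 0$ by a Weitzenb\"{o}ck/eigenvalue estimate, so no spurious contributions survive. Conversely, I would verify directly that any harmonic $3$-form $\beta$ solves the full system and hence produces a genuine Rarita-Schwinger field; since the correspondence $\Psi \leftrightarrow \beta$ is linear and is inverted by extracting the $3$-form component, this yields the desired isomorphism, and it will be realized concretely in Section \ref{Rarita Schwinger fields on nearly Kahler}.

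The hard part will be the decoupling in the third step, that is, ruling out the mixed solutions in which the $2$-form and $3$-form components conspire nontrivially. This requires the precise commutation relations between $\nabla$ and $\bar\nabla$ from Section \ref{differential operators} together with the exact eigenvalue data for the relevant Laplace operators on a nearly K\"{a}hler $6$-manifold, so that the positive curvature terms genuinely dominate. A secondary subtlety is the bookkeeping: one must check that the algebraic $S_{3/2}$-constraints are compatible with the Hodge-theoretic reduction, so that the map to harmonic $3$-forms is bijective and not merely injective.
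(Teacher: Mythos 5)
Your outline follows the same overall strategy as the paper: trivialize the spinor bundle by the Killing spinor and the $\SU(3)$-structure, rewrite the Rarita-Schwinger system via the canonical Hermitian connection $\bar{\nabla}$, and reduce to a first-order system on form-components whose only surviving content is a harmonic $3$-form. However, two concrete gaps separate your plan from a proof. First, your decoupling mechanism is mis-specified. In the actual computation the components that die by a curvature/eigenvalue argument are not the $2$-form components but the two \emph{vector-valued} pieces ${\alpha_0}^{(i)}e_i$ and ${\alpha_6}^{(i)}e_i$ coming from the $\wedge^0 M$- and $\wedge^6 M$-parts of $S_{1/2}$, and the first-order equation $D_{TM}\phi=0$ alone does not force them to vanish: its projections onto $\wedge^0 M\otimes\wedge^1 M$ and $\wedge^6 M\otimes\wedge^1 M$ only couple them to divergence terms of the $1$-form part. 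One must additionally invoke the \emph{second-order} comparison between ${D_{TM}}^2$ and $\overline{D_{TM}}^2$ (the Lichnerowicz-type formulas of Section~\ref{differential operators}) to derive $\Delta({\alpha_0}^{(i)}e_i)=-2\,{\alpha_0}^{(i)}e_i$ for the standard Laplace operator $\Delta=\nabla^{\ast}\nabla+q(R)$, whose non-negativity on $1$-forms then kills these pieces. By contrast, the genuine $2$-form components $w$ and $\varphi$ are \emph{not} eliminated by any positive-curvature Weitzenb\"ock estimate: the reduced system $\ast d\sigma=-2w$, $\delta\sigma=-2\varphi$, $\ast dw=d\varphi$, $\delta w=\delta\varphi=0$ forces $\Delta\sigma=0$ identically, and then compactness and integration by parts give $d\sigma=\delta\sigma=0$, hence $w=\varphi=0$. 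So positivity enters where you did not place it, and does not enter where you did.

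Second, your final identification is with harmonic \emph{primitive $(2,1)+(1,2)$-forms} $\sigma$, not with all harmonic $3$-forms, since $\sigma=S_{\star}\psi^{+}$ is by construction of that type. To conclude that the space of Rarita-Schwinger fields is isomorphic to the full space of harmonic $3$-forms (and hence has dimension $b_3(M)$), you need Verbitsky's theorem that on a compact strict nearly K\"ahler $6$-manifold every harmonic $3$-form is primitive of type $(2,1)+(1,2)$. Without this input your correspondence is a priori only an injection into $H^3(M)$, and the surjectivity claim in your last step is unjustified.
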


In particular, in Section \ref{Examples and Applications}, we know that there is a two dimensional space of the Rarita-Schwinger fields on the nearly K\"{a}hler $S^3 \times S^3$. This is the first example of a six-dimensional Einstein manifold with positive scalar curvature admitting Rarita-Schwinger fields.

In the same way as we prove Theorem \ref{thmA}, we can obtain a theorem about deformation theory of Killing spinors. The general theory of deformations of Killing spinors was first developed by Wang \cite{Wang}. van Coevering \cite{Coevering} recently studied deformations of Killing spinors on 3-Sasakian manifolds. We study deformations of Killing spinors on nearly K\"{a}hler manifolds. Let $E(\lambda)$ be the $\lambda$-eigenspace of the Laplace operator restricted to co-closed primitive $(1, 1)$-forms, and $K_+$ be the space of the Killing spinors with the Killing number $\frac{1}{2}$. Then our result in Section \ref{Examples and Applications} is

\begin{maintheorem} \label{thmB}
Let $(M^6,g,J)$ be a 6-dimensional compact non-K\"{a}hler nearly K\"{a}hler manifold. Then the space of the infinitesimal deformations of Killing spinors is isomorphic to the space $E(12) \oplus K_+$.
\end{maintheorem}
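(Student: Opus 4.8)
The plan is to follow the same three-step scheme used for Theorem \ref{thmA}: linearize the defining equation, run it through the bundle decompositions of Section \ref{Preliminalies}, convert every Levi-Civita operator into its Hermitian-connection form by the identities of Section \ref{differential operators}, and read off the solution space as an eigenspace of a form Laplacian. I would fix a unit Killing spinor $\psi_0\in K_+$, normalized so that $\nabla_X\psi_0=\tfrac12 X\cdot\psi_0$, and encode an infinitesimal deformation as a pair $(h,\phi)$ with $h\in\Gamma(\Sym^2 T^*M)$ the variation of the metric and $\phi\in\Gamma(S_{1/2})$ the variation of the spinor. Differentiating the Killing equation in the deformation parameter yields a first-order linear system coupling $\phi$ and $h$, in which the variation of the spin connection and of Clifford multiplication enter as explicit expressions that are algebraic in $h$ and linear in $\nabla h$.

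Next I would split this system along $T^*M\otimes S_{1/2}=S_{3/2}\oplus S_{1/2}$ and along the finer $\SU(3)$-decomposition of Section \ref{Preliminalies}. The trace of $h$ and the divergence of $h$ parametrize the gauge directions---rescalings and Lie derivatives along vector fields---which I would quotient out; the geometric content sits in the trace-free divergence-free part of $h$, which under the $\SU(3)$-structure splits into a $J$-invariant piece, corresponding to a primitive $(1,1)$-form $\alpha=h(J\cdot,\cdot)$, and a $J$-anti-invariant piece. Using $\psi_0$ and its Clifford images as a spinor frame, I would translate each spinorial quantity into differential forms, turning the linearized Killing equation into a closed first-order system whose unknowns are $\alpha$ together with the lower-degree form components of $\phi$.

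Then the identities of Section \ref{differential operators} are applied to re-express the Levi-Civita derivatives of these form components through the canonical Hermitian connection plus explicit torsion terms. Iterating the first-order system produces a second-order equation in which a curvature term appears, its coefficient fixed by the Einstein condition $\Ric=5g$ together with the nearly K\"{a}hler torsion; on the co-closed primitive $(1,1)$-component this collapses to precisely $\Delta\alpha=12\,\alpha$, so the metric-variation content is identified with $E(12)$. Setting $h=0$ reduces the whole system to the Killing equation $\nabla_X\phi=\tfrac12 X\cdot\phi$ for the spinor variation, whose solution space is $K_+$. Because a general deformation is reconstructed from its $E(12)$-datum $\alpha$ together with an ambiguity in $\phi$ valued in $K_+$, and this reconstruction splits, the two pieces assemble into the stated isomorphism with $E(12)\oplus K_+$.

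I expect the decisive difficulty to lie in the third step. Pinning down the exact constant $12$ requires carefully tracking several torsion- and curvature-dependent contributions through the passage between the Levi-Civita and Hermitian connections, and the normalization of $\psi_0$ must be threaded consistently through all of them. Equally delicate is showing that the $J$-anti-invariant part of $h$ and the remaining components of $\phi$ contribute nothing new---that is, that the deformation system has no solutions transverse to $E(12)\oplus K_+$---which hinges on the sign and magnitude of the curvature correction supplied jointly by the Einstein and nearly K\"{a}hler conditions.
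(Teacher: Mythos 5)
Your proposal is correct in outline, and its computational heart is the same as the paper's: decompose the trace-free, divergence-free symmetric variation under the $\SU(3)$-structure into a primitive $(1,1)$-form $\varphi$ and a primitive $(2,1)+(1,2)$-form $\sigma$, rewrite all Levi-Civita operators through the canonical Hermitian connection using Proposition \ref{lem:herm} and Proposition \ref{prop; action 2-tensor}, and conclude $\Delta\varphi=12\varphi$, with the spinor variation supplying $K_+$. Where you genuinely differ is at the outset: the paper never linearizes the Killing equation. It adopts Wang's framework as a \emph{definition} (Definition \ref{def: infinitesimal deformation of the Killing spinor}): a deformation is a pair $(\beta,\kappa)$ with $\beta$ symmetric, $\tr\beta=\delta\beta=0$, $D_{TM}\Psi^{(\beta,\kappa_0)}=nc\,\Psi^{(\beta,\kappa_0)}$, and $\kappa$ a Killing spinor. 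Hence the gauge-fixing, the decoupling of the metric and spinor variations, and the $K_+$ summand are built in by fiat, while you propose to derive them by differentiating $\nabla_X\psi=\tfrac12 X\cdot\psi$ in the metric, which additionally requires identifying spinor bundles of nearby metrics and proving that the coupled $(h,\phi)$ system splits; that splitting is precisely the content of Wang's theory, so your route is more self-contained but strictly longer, and the splitting must actually be proved rather than asserted. Two smaller corrections to your third step. First, in the paper the eigenvalue $12$ does not arise from a Weitzenb\"{o}ck curvature term: projecting the eigenvalue equation $D_{TM}\Psi^{(\beta,\kappa_0)}=3\Psi^{(\beta,\kappa_0)}$ onto $\wedge^1 M\otimes\wedge^1 M$ and applying the two propositions above yields the first-order system $\delta\sigma=-8\varphi$, $d\varphi=-\tfrac{3}{2}\sigma$, whence $\Delta\varphi=\delta d\varphi=-\tfrac{3}{2}\,\delta\sigma=12\varphi$; the Einstein condition and the normalization $\scal=30$ enter only through those coefficients. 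Second, the $J$-anti-invariant part of the variation is not shown to vanish or to be ``transverse'': it survives but carries no independent data, being determined by $\varphi$ via $\sigma=-\tfrac{2}{3}\,d\varphi$, which is exactly why no extra summand appears.
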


There have been some works about deformation theory on nearly K\"{a}hler 6-manifolds (\cite{Foscolo}, \cite{MoroianuNagySemmelmann}, \cite{MoroianuSemmelmann}). These works have been conducted on 6-dimensional compact non-K\"{a}hler nearly K\"{a}hler manifolds. Moroianu, Nagy and Semmelmann \cite{MoroianuNagySemmelmann} showed that, except for the round sphere $S^6$, the space of the infinitesimal deformations of nearly K\"{a}hler structures is isomorphic to the space $E(12)$. As described in the paper \cite{Grunewald}, there is a one-to-one correspondence between the space of the Killing spinors (modulo constant rescaling) and the space of nearly K\"{a}hler structures. Since the space of the Killing spinors $K_+$ is one dimensional except for $S^6$, Theorem \ref{thmB} is a reproof of the result of \cite{MoroianuNagySemmelmann}. 

\section{Preliminalies} \label{Preliminalies}
\subsection{Rarita-Schwinger fields}
In this subsection, we introduce Rarita-Schwinger fields and some related operators. Details such as definitions and Weitzenb\"{o}ck formulas are listed in \cite{YasushiSemmelmann}, \cite{Wang}.

Let $(M,g)$ be an $n$-dimensional Riemannian spin manifold with the spinor bundle $S_{1/2}$ and the complexified tangent bundle $TM^{\C}$. We consider the twisted Dirac operator on $S_{1/2} \otimes TM^{\C}$,
\[
D_{TM} = \sum_{k=1}^n (e_k \cdot \otimes \Id_{TM^{\C}}) \circ \nabla_{e_k},
\]
where $\nabla$ is the covariant derivative on $S_{1/2} \otimes TM^{\C}$ induced from the Levi-Civita connection and $e_k \cdot$ is the Clifford multiplication by an orthonormal frame $\{ e_k \}$ of $TM$. We define the vector bundle $S_{3/2}$ as $\ker \Pi$ for the bundle map $\Pi : S_{1/2} \otimes TM^{\C} \ni \zeta \otimes X \mapsto X \cdot \zeta \in S_{1/2}$. With respect to the $\Spin(n)$ decomposition $S_{1/2} \otimes TM^{\C} \cong S_{1/2} \oplus S_{3/2}$, we can write $D_{TM}$ as the $2 \times 2$ matrix
\[
D_{TM} = \left(
 \begin{array}{ccc}
 \frac{2-n}{n} D & 2 P^{\ast} \\[1ex]
 \frac{2}{n}P & Q
 \end{array}
\right),
\]
where $D:\Gamma(S_{1/2}) \rightarrow \Gamma(S_{1/2})$ is the Dirac operator in the ordinary sense, $P:\Gamma(S_{3/2}) \rightarrow \Gamma(S_{1/2})$ is the Penrose operator, and $P^{\ast}$ is the formal adjoint operator of $P$. The operator $Q:\Gamma(S_{3/2}) \rightarrow \Gamma(S_{3/2})$ is called {\it the Rarita-Schwinger operator}, which is a formally self-adjoint elliptic differential operator of first order. 

{\it A Rarita-Schwinger field} is a section $\phi$ of $S_{1/2} \otimes TM^{\C}$ that satisfies
\[
\phi \in \Gamma(S_{3/2}), \; P^{\ast}\phi = 0, \; {\rm and} \; Q\phi = 0.
\]
These equations are equivalent to $\phi \in \Gamma(S_{3/2})$ and $D_{TM}\phi = 0$. The result of Homma and Semmelmann \cite[Proposition 4.1]{YasushiSemmelmann} tells us the following. If $(M^n,g)$, $n \geq 3$, is a compact Einstein spin manifold with non-negative scalar curvature, then we have $\ker Q \cap \ker P^{\ast} = \ker Q$.

\subsection{Nearly K\"{a}hler manifolds} \label{nearly Kahler}
An almost Hermitian manifold $(M^{2m},g,J)$ is called {\it a nearly K\"{a}hler manifold} if
\begin{equation} \label{nearly Kahler condition}
(\nabla_X J)X = 0 \quad {\rm for \: all} \; X \in \Gamma(TM),
\end{equation}
where $\nabla$ denotes the Levi-Civita connection of $g$. {\it The canonical Hermitian connection} $\bar{\nabla}$, defined by
\begin{equation} \label{herm conn TM}
\bar{\nabla}_{X}Y \coloneqq \nabla_X Y - \frac{1}{2}J(\nabla_X J)Y \quad {\rm for \: all} \; X,Y \in \Gamma(TM),
\end{equation}
satisfies $\bar{\nabla}g=0$ and $\bar{\nabla}J=0$. Note that the torsion of $\bar{\nabla}$ given by $\bar{T}(X,Y)=-J(\nabla_X J)Y$ vanishes iff $(M,g,J)$ is a K\"{a}hler manifold.

Nagy \cite{NagyAndy} proved that every compact simply connected nearly K\"{a}hler manifold is isometric to the product of several Riemannian manifolds. Each manifold is in one of the following classes of nearly K\"{a}hler manifold: K\"{a}hler manifolds, naturally reductive 3-symmetric spaces, twistor spaces over compact quaternion-K\"{a}hler manifolds with positive scalar curvature, and 6-dimensional nearly K\"{a}hler manifolds. From now on, we will consider only compact 6-dimensional strict (i.e. non-K\"{a}hler) nearly K\"{a}hler manifolds. It is an important fact that the Ricci curvature satisfies $\Ric=5g$. In particular, these manifolds are Einstein manifolds. Note that compactness and completeness are equivalent on 6-dimensional strict nearly K\"{a}hler manifolds, because the Ricci curvature is positive. 

We denote as usual the K\"{a}hler form of $M$ by $\omega \coloneqq g(J\cdot, \cdot)$. The tensor $\psi^+ \coloneqq \nabla \omega$ is a 3-form by $(\ref{nearly Kahler condition})$, and the tensor $\psi^- \coloneqq \ast \psi^+$ is also a 3-form, where $\ast$ is the Hodge star operator. The $\bar{\nabla}$-parallel complex volume form is represented as $\psi^+ + i\psi^-$. Also, we know the (real) volume form $\vol = \vol_g$ coincides with $\frac{1}{4} \psi^+ \wedge \psi^-$. It is a prominent property that the pair $(\omega, \psi^+)$ leads to an $\SU(3)$-structure on $TM$ (cf. \cite{Hitchin}). 

For a vector field $X$, let $A_X$ denote the section $A_X = J(\nabla_X J)$ of $\End TM$. By definition, the canonical Hermitian connection is written as $\bar{\nabla}_X = \nabla_X - \frac{1}{2}A_X$ on the tangent bundle. For every endomorphism or 2-tensor $B$ in $\End TM \cong T^{\ast}M \otimes T^{\ast}M$, we denote by $B_{\star}$ in $\End \mathcal{T}M$ the natural extension of $B$ (cf. \cite[p.3059]{MoroianuSemmelmann}), where $\mathcal{T}M$ is a tensor bundle. Then a similar equation $\bar{\nabla}_X = \nabla_X - \frac{1}{2}A_{X \star}$ holds on $\mathcal{T}M$. From now on, we identify $TM$ with $T^{\ast}M$ using the metric without notice.

As stated in \cite{Gray}, a spin structure exists on a 6-dimensional strict nearly K\"{a}hler manifold. The spin connection $\nabla$ is obtained by pulling back the Levi-Civita connection to the spinor bundle. Similarly, we pull back the canonical Hermitian connection to the spinor bundle and also denote this connection on the spinor bundle as $\bar{\nabla}$, which is written explicitly as
\begin{equation} \label{herm conn S}
\bar{\nabla}_X \zeta = \nabla_X \zeta - \frac{1}{4}X \mathbin{\lrcorner} \psi^- \cdot \zeta \quad {\rm for \: all} \; X \in \Gamma(TM), \zeta \in \Gamma(S_{1/2}).
\end{equation}

\subsection{Algebraic results on nearly K\"{a}hler manifolds} \label{Algebraic results}
In the subsections \ref{Algebraic results} and \ref{curvatureLaplacian}, we mainly summarize the contents of the articles \cite{Foscolo} and \cite{MoroianuSemmelmann} which we will use in later calculations. Assume that $(M^6,g,J)$ is a 6-dimensional strict nearly K\"{a}hler manifold with the normalized scalar curvature $\scal = 30$. We decompose the exterior bundles $\wedge^2 M$, $\wedge^3 M$, and the spinor bundle $S_{1/2}$. Also, we write the action of differential forms on the Killing spinors by the Clifford multiplication explicitly. 

The exterior bundle $\wedge^2 M$ decomposes into $\SU(3)$ irreducible components as follows:
\[
\wedge^2 M \cong \R \omega \oplus \wedge^{(2,0)+(0,2)} M \oplus \wedge^{(1,1)}_0 M,
\]
where $\wedge^{(1,1)}_0 M$ is the bundle of primitive $(1,1)$-forms. The map $X \mapsto X \mathbin{\lrcorner} \psi^+$ identifies the second summand with $TM$. The map $h \mapsto g(Jh \cdot, \cdot)$ defines an isomorphism between $\Sym^+_0 M$ and the third summand, where $\Sym^+ M$ is the bundle of symmetric endomorphisms commuting with $J$, and $\Sym^+_0 M$ is the trace-free part of $\Sym^+ M$. Similarly, one can decompose the exterior bundle $\wedge^3 M$ into $\SU(3)$ irreducible components
\[
\wedge^3 M \cong \wedge^1 M \wedge \omega \oplus \wedge^{(3,0)+(0,3)} M \oplus \wedge^{(2,1)+(1,2)}_0 M.
\]
The second summand is a rank 2 trivial bundle spanned by the forms $\psi^{\pm}$. The isomorphism $S \mapsto S_{\star}\psi^+$ identifies $\Sym^- M$, the bundle of symmetric endomorphisms anticommuting with $J$, with the third summand. Here we note that the bundle of symmetric endomorphisms $\Sym M$ is equal to $\Sym^+ M \oplus \Sym^- M$.

Since the scalar curvature is normalized as $\scal = 30$, $M$ admits a unit Killing spinor $\kappa$ with the Killing number $\frac{1}{2}$. The Killing spinor $\kappa$ defines a bundle map $\gamma \mapsto \gamma \cdot \kappa$ from $\wedge^0 M \oplus \wedge^1 M \oplus \wedge^6 M$ to $S_{1/2}$. This map preserves the inner product, so must be injective. Since its domain and target have equal rank, it is an isomorphism: 
\begin{equation} \label{irrep decomp S_{frac{1}{2}}}
\wedge^0 M \oplus \wedge^1 M \oplus \wedge^6 M \cong S_{1/2}.
\end{equation}

We give various actions on the Killing spinor, which is used in later calculations. From the construction of the Killing spinor (see \cite{Grunewald}), the equation is obtained: 
\begin{equation} \label{5-form}
JX \cdot \kappa = \vol \cdot X \cdot \kappa = -X \cdot \vol \cdot \kappa \quad {\rm for \: all} \; X \in \Gamma(TM).
\end{equation}
Next, the form $\psi^- \in \Omega^{(3,0)+(0,3)}M$ acts as follows (cf. \cite[Lemma 2]{CharbonneauHarland}):
\begin{equation} \label{3-form}
\psi^- \cdot \kappa = 4\kappa.
\end{equation}
For any 2-form $\eta= \lambda \omega + Y \mathfrak{\lrcorner} \psi^+ + \eta_0 \in C^{\infty}(M)\omega \oplus \Omega^{(2,0)+(0,2)}M \oplus \Omega^{(1,1)}_0 M$, we have 
\begin{equation} \label{2-form}
\eta \cdot \kappa = 3\lambda \vol \cdot \kappa + 2JY \cdot \kappa.
\end{equation}
The proof of $(\ref{2-form})$ is in \cite[Lemma 3.3]{Foscolo}. Moreover, for any vector fields $X$ and $Y$, the identity holds:
\begin{equation} \label{2-form action killing2}
X \cdot Y \cdot \kappa = - g(X,Y)\kappa + \omega(X,Y)\kappa + A_X Y \cdot \kappa.
\end{equation}

\subsection{The curvature endomorphism} \label{curvatureLaplacian}
In this subsection, we consider several operators related to the curvatures for the Levi-Civita connection $\nabla$ and the canonical Hermitian connection $\bar{\nabla}$. Furthermore, we introduce properties of these operators and relationships between them. Please refer to \cite{MoroianuSemmelmann} for detailed definitions and proofs. Note that in the remaining part of this paper, we adopt the Einstein convention of summation on the repeated subscripts. 

We review the definition of the basic curvature tensor. The curvature tensor is defined by
\[
R(W,X)Y \coloneqq \nabla_W \nabla_X Y - \nabla_X \nabla_W Y - \nabla_{[W,X]}Y \quad {\rm for \: all} \; W,X,Y \in \Gamma(TM).
\]
Also, for a vector field $Z$, we denote $g(R(W,X)Y,Z)$ by $R_{WXYZ}$. Replacing the Levi-Civita connection $\nabla$ with the canonical Hermitian connection $\bar{\nabla}$, we define the curvature tensor $\bar{R}$. The following relation holds for these two curvature tensors $R$ and $\bar{R}$. 

\begin{lemma}[\cite{Gray}]
For any vector fields $W,X,Y,Z$, one has 
\begin{equation} \label{hermitian curvature difference}
 \begin{split}
 R_{WXYZ} &= \bar{R}_{WXYZ} - \frac{1}{4} \Big( g(Y,W)g(X,Z) - g(X,Y)g(Z,W) \\
 &\quad - 3g(Y,JW)g(JX,Z) + 3g(Y,JX)g(JW,Z) + 2g(X,JW)g(JY,Z) \Big).
 \end{split}
\end{equation}
\end{lemma}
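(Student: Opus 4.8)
The plan is to treat $\bar{\nabla}$ as a perturbation of $\nabla$ and to feed the nearly K\"{a}hler structure equations into the standard formula describing how curvature changes under a change of connection. Writing $\bar{\nabla}_X = \nabla_X + S_X$ with $S_X Y = -\tfrac12 A_X Y = -\tfrac12 J(\nabla_X J)Y$ on $TM$, the two curvatures are related by
\[
\bar{R}(W,X)Y = R(W,X)Y + (\nabla_W S)(X,Y) - (\nabla_X S)(W,Y) + S_W S_X Y - S_X S_W Y,
\]
and I would pair this identity with $Z$ to obtain $\bar{R}_{WXYZ} - R_{WXYZ}$. The entire proof then reduces to showing that, once the nearly K\"{a}hler identities are inserted, the right-hand side collapses to the purely algebraic expression in $g$ and $\omega(\cdot,\cdot)=g(J\cdot,\cdot)$ stated in the lemma.

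For the quadratic terms I would first simplify $S_W S_X Y = \tfrac14 J(\nabla_W J)J(\nabla_X J)Y = \tfrac14 (\nabla_W J)(\nabla_X J)Y$, using $(\nabla_W J)J = -J(\nabla_W J)$ and $J^2 = -\Id$. Pairing with $Z$ and using that $\nabla_W J$ is skew-symmetric (since $g((\nabla_W J)U,V)=\psi^+(W,U,V)$ is totally skew), these reduce to contractions of the form $g((\nabla_W J)Z,(\nabla_X J)Y)$. Here the key input is Gray's algebraic identity, valid under the normalization $\scal = 30$,
\[
g((\nabla_W J)X,(\nabla_Y J)Z) = g(W,Y)g(X,Z) - g(W,Z)g(X,Y) - g(JW,Y)g(JX,Z) + g(JW,Z)g(JX,Y),
\]
obtained by polarizing $(\nabla_X J)X=0$ and invoking the Einstein condition; its diagonal specialization $|(\nabla_X J)Y|^2 = |X|^2|Y|^2 - g(X,Y)^2 - g(JX,Y)^2$ fixes the normalization. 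This turns every quadratic term into a combination of $g$ and $\omega$.

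The delicate part is the first-order terms $(\nabla_W S)(X,Y)-(\nabla_X S)(W,Y)$, which involve the second covariant derivative $\nabla^2 J$. A naive expansion reintroduces curvature through the Ricci identity $(\nabla^2_{W,X}J - \nabla^2_{X,W}J)Y = [R(W,X),J]Y$, which would make the argument circular. I would sidestep this by exploiting that the complex volume form, and hence $\psi^+ = \nabla\omega$, is $\bar{\nabla}$-parallel: from $\bar{\nabla}\psi^+ = 0$ together with $\bar{\nabla}_W = \nabla_W - \tfrac12 A_{W\star}$ one gets $\nabla_W \psi^+ = \tfrac12 A_{W\star}\psi^+$, which is Gray's second-order identity and is explicitly the algebraic $3$-form $\nabla_W\psi^+ = -W^{\flat}\wedge\omega$. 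Since $\psi^+(W,X,Y)=g((\nabla_W J)X,Y)$, this expresses $\nabla A$ purely algebraically in $\omega$ and $g$, so the first-order terms also become algebraic.

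With both the quadratic and the first-order contributions reduced to algebraic expressions, the final step is bookkeeping: contract with $Z$, repeatedly apply $g(JU,V)=-g(U,JV)$ and the skew-symmetry of $\nabla J$, and collect until the five monomials with coefficients $1,-1,-3,3,2$ appear. I expect this last collection, and in particular pinning down the coefficients $-3, 3, 2$, to be the main obstacle, since it requires tracking precisely how the quadratic torsion terms interact with the derivative-of-$\psi^+$ terms; the two conceptual inputs (Gray's first- and second-order identities) are otherwise routine to quote or re-derive.
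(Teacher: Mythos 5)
The paper itself offers no proof of this lemma: it is quoted verbatim from Gray's 1976 paper, so your proposal is necessarily a different route, and it is a sound one. Your change-of-connection formula is correct (the torsion of $\nabla$ vanishes, so no extra term appears), and the two inputs you identify are exactly what is needed: Gray's constant-type identity, and the algebraic formula for $\nabla^2 J$ obtained from $\bar{\nabla}\psi^+=0$ together with the paper's identity $A_{X\star}\psi^+=-2X\wedge\omega$, which gives $\nabla_W\psi^+=-W\wedge\omega$, i.e. $(\nabla^2_{W,X}J)Y=-g(W,X)JY+g(W,Y)JX-\omega(X,Y)W$. One bookkeeping point you must make explicit, because it is where the coefficients $-3,3,2$ actually come from: $(\nabla_W S)(X,Y)$ is \emph{not} purely a $\nabla^2 J$ term, since expanding $\nabla_W\bigl(J(\nabla_X J)\bigr)$ produces $(\nabla_WJ)(\nabla_XJ)+J\nabla^2_{W,X}J$. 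Hence the quadratic terms enter with total coefficient $-\frac12+\frac14=-\frac14$ (from $\nabla S$ and $[S_W,S_X]$ respectively) and the second-order terms with coefficient $-\frac12$:
\begin{equation*}
\bar{R}(W,X)Y-R(W,X)Y \;=\; -\tfrac14\bigl[(\nabla_WJ),(\nabla_XJ)\bigr]Y \;-\; \tfrac12\,J\bigl(\nabla^2_{W,X}J-\nabla^2_{X,W}J\bigr)Y .
\end{equation*}
Substituting the two Gray identities into this and collecting with $g(JU,V)=-g(U,JV)$ reproduces precisely the five monomials with coefficients $1,-1,-3,3,2$; I carried this out and it checks, so the step you flag as "the main obstacle" is only mechanical, not a gap in the method.

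The one genuine inaccuracy is your justification of the constant-type identity. It is not obtained by "polarizing $(\nabla_XJ)X=0$ and invoking the Einstein condition": polarizing the nearly K\"ahler condition only yields skew-symmetry of $\nabla J$, and even polarizing the diagonal norm identity determines the four-variable expression $g((\nabla_WJ)X,(\nabla_YJ)Z)$ only up to a $4$-form ambiguity. That six-dimensional strict nearly K\"ahler manifolds have constant type $\scal/30$, in the full polarized form you state, is a theorem of Gray and should simply be cited as such. With that citation in place of the polarization remark, your argument is a complete, self-contained proof of a statement the paper merely quotes.
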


From the equation $(\ref{hermitian curvature difference})$ and the fact that $(M^6,g,J)$ is an Einstein manifold with $\Ric = 5g$, we see that the Ricci curvature of $\bar{R}$ satisfies $\overline{\Ric} = 4g$. Also, using the formula $(\ref{hermitian curvature difference})$ and the first Bianchi identity, for any vector fields $X,Y,Z$, we have 
\begin{equation} \label{hermitian Bianchi identity}
 \begin{split}
 &\bar{R}(X,Y)Z + \bar{R}(Y,Z)X + \bar{R}(Z,X)Y \\
 &= 2 \left( g(JX,Y)JZ + g(JY,Z)JX + g(JZ,X)JY \right).
 \end{split}
\end{equation}

Next, we write the curvature operator $R: \wedge^2 M \rightarrow \wedge^2 M$ properly:
\[
R(e_i \wedge e_j) = \frac{1}{2} R_{ijkl}e_k \wedge e_l = \frac{1}{2} e_k \wedge R(e_i,e_j)e_k,
\]
where $\{ e_i \}$ is a local orthonormal frame of $TM$. Let $EM$ be a vector bundle associated to the oriented orthonormal frame bundle or the spin bundle on $(M,g)$. We now define the curvature endomorphism $q(R) = q_E(R) \in \End (EM)$ as
\begin{equation} \label{curvature endo EM}
q(R) = \frac{1}{2} (e_i \wedge e_j)_{\star}R(e_i \wedge e_j)_{\star}.
\end{equation}
In particular, we have $q(R) = \Ric = 5\Id$ on 1-forms.

We can consider the curvature endomorphism for the canonical Hermitian connection $q(\bar{R})$. It is straightforward to show that the curvature endomorphism $q(\bar{R})$ satisfies $q(\bar{R}) = \overline{\Ric} = 4\Id$ on 1-forms. An interesting property of the curvature endomorphism $q(\bar{R})$ is that it preserves all tensor bundles associated to $\SU(3)$-representations. Refer to \cite[p.3061]{MoroianuSemmelmann} for the reason. 

\subsection{The actions of $A$} \label{action 2-tensors}
Let $X$ be a vector field, and $A_X$ be the tensor field $A_X = J(\nabla_X J)$. In Subsection \ref{nearly Kahler}, we introduced the extension $A_{X \star}$ to tensor fields $\Gamma(\End \mathcal{T}M)$. The result of Moroianu and Semmelmann \cite[Lemma 4.3]{MoroianuSemmelmann} gives the following equations related to the extension $A_{X \star}$ and the canonical Hermitian connection $\bar{\nabla}$ on $\wedge^{(1,1)}_0 M$, $\Sym^+_0 M$, $\Sym^- M$, and $\wedge^{(2,1)+(1,2)}_0 M$. We will essentially use these equations in the proof of Theorem \ref{thmA}.

\begin{proposition}[\cite{MoroianuSemmelmann}] \label{lem:herm}
Let $\varphi$ and $S$ be sections of $\wedge^{(1,1)}_0 M$ and $\Sym^- M$, respectively. Sections $h$
and $\sigma$ are defined in Subsection \ref{Algebraic results} by $g(Jh \cdot, \cdot) \coloneqq \varphi(\cdot, \cdot)$ and $\sigma \coloneqq S_{\star}\psi^+$. Then we have
 \begin{eqnarray} 
 A_{e_i \star}\bar{\nabla}_{e_i}\varphi &=& - (J\delta \varphi) \mathbin{\lrcorner} \psi^+ . \label{herm wedge^{(1,1)}_0} \\
 A_{e_i \star}\bar{\nabla}_{e_i}\sigma &=& - 2\delta S \wedge \omega . \label{herm wedge^{(2,0)+(0,2)}} \\
 (A_{e_i \star}\bar{\nabla}_{e_i}h)_{\star}\psi^+ &=& 2\delta h \wedge \omega - 4d\varphi . \label{herm Sym^+} \\
 A_{e_i \star}\bar{\nabla}_{e_i}S &=& (\delta S \mathbin{\lrcorner} \psi^+ + \delta\sigma) \circ J . \label{herm Sym^-}
 \end{eqnarray}
Here $\delta$ denotes the co-differential on differential forms or the divergence operator whenever applied to symmetric endomorphisms.
\end{proposition}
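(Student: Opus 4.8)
All four are pointwise identities between first-order differential expressions, so the plan is to prove each of them at a fixed point $p$ in a local orthonormal frame $\{e_i\}$ that is $\bar{\nabla}$-parallel at $p$. The engine is the decomposition $\nabla_X = \bar{\nabla}_X + \frac{1}{2}A_{X\star}$ together with the fact that $\bar{\nabla}$ preserves $g$, $J$, $\omega$ and $\psi^{\pm}$; consequently $\bar{\nabla}$ preserves every $\SU(3)$-summand of Subsection~\ref{Algebraic results} and commutes with the isomorphisms $X \mapsto X \lrcorner \psi^+$, $h \mapsto g(Jh\cdot,\cdot)$ and $S \mapsto S_{\star}\psi^+$, while the tensor $A$ itself is $\bar{\nabla}$-parallel, being an algebraic combination of the $\bar{\nabla}$-parallel tensors $g$, $J$ and $\psi^+$. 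I would first record the algebraic properties of $A_X$ that drive the computation: it is skew-symmetric and anticommutes with $J$, it satisfies $A_X Y = -A_Y X$ and $A_{e_i}e_i = 0$ by the nearly K\"{a}hler condition $(\ref{nearly Kahler condition})$, it is expressed through the defining form by $g(A_X Y,Z) = -\psi^+(X,Y,JZ)$, and it obeys the quadratic contraction identities fixed by the normalization $\scal = 30$.

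The mechanism is then uniform across the four cases. Writing the Levi-Civita operators through $\bar{\nabla}$ gives, for a $2$-form, $\delta\varphi = -e_i \lrcorner \bar{\nabla}_{e_i}\varphi - \frac{1}{2}e_i \lrcorner A_{e_i\star}\varphi$ and $d\varphi = e_i \wedge \bar{\nabla}_{e_i}\varphi + \frac{1}{2}e_i \wedge A_{e_i\star}\varphi$, with analogous splittings for the divergences of $h$ and $S$; each such operator breaks into a genuinely first-order $\bar{\nabla}$-part and a zeroth-order algebraic $A$-part. The left-hand sides $A_{e_i\star}\bar{\nabla}_{e_i}(\cdot)$ are purely first order: since $A$ is $\SU(3)$-invariant, the assignment $\xi \otimes v \mapsto A_{\xi\star}v$ is an $\SU(3)$-equivariant algebraic map $T^{\ast}M \otimes V_{\mathrm{in}} \to V_{\mathrm{out}}$ applied to $\bar{\nabla}(\cdot)$. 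Because the value and the $\bar{\nabla}$-derivative of the input can be prescribed independently at $p$, matching the first-order parts of the two sides of each identity reduces to an equality of $\SU(3)$-equivariant maps $T^{\ast}M \otimes V_{\mathrm{in}} \to V_{\mathrm{out}}$; by Schur's lemma these occupy a small $\mathrm{Hom}$-space and are pinned down by evaluation on one or two decomposable elements. Matching the zeroth-order parts reduces to an equality of equivariant maps $V_{\mathrm{in}} \to V_{\mathrm{out}}$, and since the left-hand sides carry no zeroth-order part, this amounts to checking that the algebraic $A$-corrections assembled on the right cancel. For the identity on $\wedge^{(1,1)}_0 M$, for example, $A_{e_i}e_i = 0$ and the $(1,1)$-type of $\varphi$ give $e_i \lrcorner A_{e_i\star}\varphi = 0$ (this contraction is the $3$-form $\psi^+$ paired with the symmetric endomorphism $h$, hence vanishes), so the correction drops out and one is left with the single equivariant identity $A_{e_i\star}\bar{\nabla}_{e_i}\varphi = (J(e_i \lrcorner \bar{\nabla}_{e_i}\varphi)) \lrcorner \psi^+$.

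I would then carry this out case by case. The subtlety in the bookkeeping is that, by the anticommutation of $A_X$ with $J$, the left-hand side of each identity lies in a definite $\SU(3)$-summand of the target, whereas the right-hand side is assembled from $d$, $\delta$ and divergences whose outputs a priori spread over several summands; part of the verification is therefore to check that the components of the right-hand side in the `wrong' summands cancel and that the surviving component matches. The main obstacle is entirely computational: evaluating the repeated contractions of $A$ (equivalently of $\psi^+$) against $2$-forms, $3$-forms and symmetric $2$-tensors, keeping the $\nabla$-versus-$\bar{\nabla}$ torsion corrections straight, and pinning down the constants $-1$, $-2$, $2$, $-4$ together with the correct target summand and the role of the post-composition with $J$ in the identity on $\Sym^- M$. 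A clean way to control all the constants simultaneously is to observe that every term is $\SU(3)$-equivariant and tensorial, so by Schur's lemma each identity reduces to finitely many scalars, which can be fixed by a single computation in the isotropy representation at a point rather than by a global argument.
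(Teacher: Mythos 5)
The first thing to note is that the paper contains no proof of this proposition: it is imported, with attribution, from Moroianu--Semmelmann \cite{MoroianuSemmelmann} (their Lemma 4.3), modulo the correction to $(\ref{herm Sym^+})$ that the authors flag in the remark immediately after the statement. So there is no in-paper argument to measure yours against; the natural benchmarks are the original proof in \cite{MoroianuSemmelmann} and the paper's own proof of the companion statement, Proposition \ref{prop; action 2-tensor}, which handles $\widetilde{A_{e_i \star}}$ by exactly the kind of computation you outline. Your skeleton is sound and is essentially the same mechanism used in both places: split $\nabla_X = \bar{\nabla}_X + \frac{1}{2}A_{X\star}$, observe that the torsion corrections to $d$, $\delta$ and the divergences vanish by the Schur-lemma relations $(\ref{Schur w})$--$(\ref{Schur S'})$, use that the value and the $\bar{\nabla}$-derivative of a section can be prescribed independently at a point to decouple the zeroth-order and first-order parts, and reduce each identity to a pointwise $\SU(3)$-equivariant algebraic identity in $\bar{\nabla}(\cdot)$. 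The individual claims you make along the way (skewness of $A_X$, $A_XJ=-JA_X$, $A_XY=-A_YX$, $\bar{\nabla}$-parallelism of $A$, $e_i \mathbin{\lrcorner} A_{e_i \star}\varphi = 0$) are all correct.

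However, as written this is a plan rather than a proof, and what you defer is the entire content. None of the four right-hand sides is actually derived: the constants $-1$, $-2$, $2$, $-4$, the choice between $\psi^+$ and $\psi^-$, and the post-composition with $J$ in $(\ref{herm Sym^-})$ are precisely what the proposition asserts, and they are also precisely where things go wrong in practice --- the paper explicitly remarks that the published version of $(\ref{herm Sym^+})$ in \cite{MoroianuSemmelmann} is erroneous, so an argument ending with ``the scalars can be fixed by a single computation'' has not yet distinguished the correct formula from the incorrect published one. Moreover, the Schur-counting shortcut (``small Hom-space \ldots pinned down by evaluation on one or two decomposable elements'') is too optimistic in at least the case of $(\ref{herm Sym^-})$: complexifying, $T^{\ast}M \otimes \Sym^- M$ decomposes as $(3\oplus\bar{3})\otimes(6\oplus\bar{6})$ and contains the adjoint representation with multiplicity two, so the relevant space of equivariant maps into $\Sym^+ M$ is not one-dimensional, and a single evaluation imposes only one linear condition on it; one must either carry out the multiplicity count honestly and choose evaluations accordingly, or simply verify the reduced algebraic identity on arbitrary decomposable arguments by direct frame computation using $(\ref{AA})$, $(\ref{induced action on form})$, $(\ref{induced action on psi})$ --- which is what both \cite{MoroianuSemmelmann} and the paper's proof of Proposition \ref{prop; action 2-tensor} actually do. Your reduction brings you to the starting line of that computation; it does not replace it.
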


We remark that Moroianu and Semmelmann \cite{MoroianuSemmelmann} make a mistake regarding a formula $(\ref{herm Sym^+})$, but it does not affect the results in \cite{MoroianuSemmelmann} and ours.

Next, we introduce the other extension $\widetilde{A_{X \star}}$ to 2-tensor fields, which also plays an important role in the proof of Theorem \ref{thmA}. The extension $A_{X \star}$ is expressed for a 2-tensor field $\alpha \otimes \beta \in \Gamma(TM \otimes TM)$ as
\[
A_{X \star}(\alpha \otimes \beta) = A_X \alpha \otimes \beta + \alpha \otimes A_X \beta.
\]
On the other hand, we define $\widetilde{A_{X \star}}$ for $\alpha \otimes \beta \in \Gamma(TM \otimes TM)$ as
\[
\widetilde{A_{X \star}}(\alpha \otimes \beta) = A_X \alpha \otimes \beta - \alpha \otimes A_X \beta.
\]
To make formulas for $\widetilde{A_{X \star}}$ similar to those in Proposition \ref{lem:herm}, we prepare various tools.

First, for vector fields $X$, $Y$, $Z$, the identity holds:
\[
A_Z A_X Y = -g(X, Z)Y + g(Y, Z)X + g(JX, Z)JY - g(JY, Z)JX.
\]
This identity yields
\begin{equation} \label{AA}
A_{A_X Y} = X \wedge Y - JX \wedge JY.
\end{equation}

Next, we consider the induced action of a endomorphism $B \in \Gamma(\End TM)$ introduced in Subsection \ref{nearly Kahler}. The induced action of $B$ on a p-form $u$ is written as
\begin{equation} \label{induced action on form}
B_{\star}u = -B^{\ast}(e_i) \wedge e_i \mathbin{\lrcorner} u,
\end{equation}
where $B^{\ast}$ is the metric adjoint of $B$ and $\{ e_i \}$ is a local orthonormal frame of $TM$. Taking $A_X$ as $B$ and $\psi^+$ as $u$, we get by a easy calculation (cf. \cite[equation $(2.9)$]{MoroianuSemmelmann})
\begin{equation} \label{induced action on psi}
A_{X \star}\psi^+ = -2 X \wedge \omega.
\end{equation}
Taking $J$ as $B$, we have $(J_{\star})^2 = -(q-p)^2$ on $\Omega^{(p,q)+(q,p)}M$. By this characterization, we find that
\begin{eqnarray*}
\Gamma(\Sym^+ M) \ni h &\mapsto& h_{\star}\psi^{\pm} \in \Omega^{(3,0)+(0,3)}M, \\
\Gamma(\Sym^- M) \ni S &\mapsto& S_{\star}\psi^{\pm} \in \Omega^{(2,1)+(1,2)}M, \\
\Omega^{(1,1)}M \ni w &\mapsto& w_{\star}\psi^{\pm} \in \Omega^{(3,0)+(0,3)}M.
\end{eqnarray*}
A section $h_{\star}\psi^{\pm} \in \Omega^{(3,0)+(0,3)}M$ is expressed as $a\psi^+ + b\psi^-$ at each point by using some constants $a,b$. By substituting some bases and calculating $a,b$, we get
\begin{equation}
h_{\star}\psi^{\pm} = - \frac{1}{2} (\tr h) \psi^{\pm}.
\end{equation}
In particular, for a section $h$ of $\Sym^+_0 M$, we have
\begin{equation} \label{induced action h}
h_{\star}\psi^{\pm} = 0.
\end{equation}
In the same fashion, we have
\begin{equation} \label{induced action w}
w_{\star}\psi^{\pm} = 0.
\end{equation}

We consider the Hodge star operator $\ast$. The action of $\ast$ on $S_{\star}\psi^{\pm}$ is shown in \cite[equation (12)]{MoroianuNagySemmelmann}:
\begin{equation} \label{induced action S}
\ast (S_{\star}\psi^+) = - S_{\star}\psi^-.
\end{equation}
For a primitive $(1,1)$-form $\gamma$, we have
\begin{equation} \label{Hodge star wedge^{(1,1)}_0}
\ast(e_i \wedge \gamma) = - e_i \mathbin{\lrcorner} \ast \gamma = e_i \mathbin{\lrcorner} (\gamma \wedge \omega) = (e_i \mathbin{\lrcorner} \gamma) \wedge \omega + Je_i \wedge \gamma.
\end{equation}
Here, refer \cite[Lemma 2.3]{Foscolo} for the reason why $\ast \gamma = - \gamma \wedge \omega$ is valid for $\gamma \in \Omega^{(1,1)}_0 M$.

Finally, by applying Schur's Lemma to $\SU(3)$-decompositions, we get some relations (cf. \cite[Lemma 4.2]{MoroianuSemmelmann}).
\begin{lemma}
The following relations hold:
\begin{eqnarray}
e_i \wedge (A_{e_i \star}w) &=& 0 \quad {\rm for \: all} \; w \in \wedge^{(1,1)}_0 M, \label{Schur w} \\
e_i \mathbin{\lrcorner} (A_{e_i \star}w) &=& 0 \quad {\rm for \: all} \; w \in \wedge^{(1,1)}_0 M, \label{Schur w'} \\
(A_{e_i \star}h)(e_i) &=& 0 \quad {\rm for \: all} \; h \in \Sym^+_0 M, \label{Schur h} \\
(A_{e_i \star}S)(e_i) &=& 0 \quad {\rm for \: all} \; S \in \Sym^- M, \label{Schur S} \\
e_i \mathbin{\lrcorner} (A_{e_i \star}(S_{\star}\psi^-)) &=& 0 \quad {\rm for \: all} \; S \in \Sym^- M. \label{Schur S'}
\end{eqnarray}
\end{lemma}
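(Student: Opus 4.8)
The plan is to read each of the five identities as the assertion that a certain $\SU(3)$-equivariant bundle homomorphism vanishes identically, and then to invoke Schur's Lemma after identifying the $\SU(3)$-irreducible types of its source and target. The starting observation is that $A_X = J(\nabla_X J)$ is, at each point, an algebraic expression in the $\SU(3)$-invariant tensors $g$, $J$ and $\psi^{\pm}$: since $g((\nabla_X J)Y,Z)=\psi^+(X,Y,Z)$, the value $A_X Y$ is determined pointwise by $\psi^+$ and $J$. Consequently the extensions $A_{X\star}$ to tensor and exterior bundles, together with the frame-summed operations $w\mapsto e_i\wedge(A_{e_i\star}w)$, $w\mapsto e_i\mathbin{\lrcorner}(A_{e_i\star}w)$, $h\mapsto (A_{e_i\star}h)(e_i)$ and $S\mapsto e_i\mathbin{\lrcorner}(A_{e_i\star}(S_{\star}\psi^-))$, are all $\SU(3)$-equivariant, because the frame sum $e_i\otimes e_i$ is itself $\SU(3)$-invariant (it is the metric). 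Thus each expression defines an $\SU(3)$-morphism between the associated bundles named by its source and target, and Schur's Lemma will apply fibrewise.

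The second step is bookkeeping of $\SU(3)$-types. Complexifying, the standard and conjugate representations give $TM\cong\wedge^1 M\cong\mathbf{3}\oplus\bar{\mathbf{3}}$; the isomorphisms recorded in Subsection \ref{Algebraic results} give $\wedge^{(1,1)}_0 M\cong\Sym^+_0 M\cong\mathbf{8}$ (the adjoint $\mathfrak{sl}(3,\C)$) and $\Sym^- M\cong\wedge^{(2,1)+(1,2)}_0 M\cong\mathbf{6}\oplus\bar{\mathbf{6}}$ (from $S^2\C^3$ and its conjugate); and, the $\langle\psi^{\pm}\rangle$ summand being trivial, one reads off
\begin{equation*}
\wedge^2 M\cong\mathbf{1}\oplus\mathbf{3}\oplus\bar{\mathbf{3}}\oplus\mathbf{8}, \qquad \wedge^3 M\cong\mathbf{1}^{\oplus 2}\oplus\mathbf{3}\oplus\bar{\mathbf{3}}\oplus\mathbf{6}\oplus\bar{\mathbf{6}}.
\end{equation*}
The irreducibles $\mathbf{1},\mathbf{3},\bar{\mathbf{3}},\mathbf{6},\bar{\mathbf{6}},\mathbf{8}$ are pairwise non-isomorphic, which is all that will be needed.

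Finally I would match each identity against this table. Relations (\ref{Schur w}) and (\ref{Schur w'}) are morphisms out of $\mathbf{8}$ into $\wedge^3 M$ and into $\wedge^1 M\cong\mathbf{3}\oplus\bar{\mathbf{3}}$ respectively; (\ref{Schur h}) is a morphism $\mathbf{8}\to TM\cong\mathbf{3}\oplus\bar{\mathbf{3}}$; (\ref{Schur S}) is $\mathbf{6}\oplus\bar{\mathbf{6}}\to TM$; and (\ref{Schur S'}) is $\mathbf{6}\oplus\bar{\mathbf{6}}\to\wedge^2 M$. In each case the source and target share no common irreducible summand, so Schur's Lemma forces the morphism to vanish; since the complexified map is zero, so is the original real one. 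The only genuinely delicate point is the first step, namely confirming the $\SU(3)$-equivariance of the frame-summed operations (equivalently, that expressing $A$ through $\psi^{\pm}$ and $J$ leaves no non-invariant ingredient); once that is secured, the representation-theoretic comparison is routine and each vanishing is immediate.
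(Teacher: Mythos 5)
Your proposal is correct and is essentially the paper's own argument: the paper proves this lemma precisely by observing that each frame-summed expression is an $\SU(3)$-equivariant algebraic bundle map (since $A$ is built pointwise from the invariant tensors $\psi^+$ and $J$) and applying Schur's Lemma to the $\SU(3)$-irreducible decompositions of source and target, following \cite[Lemma 4.2]{MoroianuSemmelmann}. Your type bookkeeping ($\mathbf{8}$, $\mathbf{6}\oplus\bar{\mathbf{6}}$, $\mathbf{3}\oplus\bar{\mathbf{3}}$, trivial summands) and the conclusion that no source irreducible appears in the corresponding target are accurate, so the vanishing follows exactly as you say.
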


Now, we have all the tools to prove the proposition below.
\begin{proposition} \label{prop; action 2-tensor}
Let $w$, $h$, and $S$ be sections of $\wedge^{(1,1)}_0 M$, $\Sym^+_0 M$, and $\Sym^- M$, respectively. A section $\sigma$ is defined by $\sigma = S_{\star}\psi^+$. Then we have
 \begin{eqnarray}
 (\widetilde{A_{e_i \star}}\bar{\nabla}_{e_i}w)_{\star}\psi^+ &=& 2\delta w \wedge \omega + 4 \ast dw. \label{second induced action wedge^{(1,1)}_0} \\
 \widetilde{A_{e_i \star}}\bar{\nabla}_{e_i}h &=& - \delta h \mathbin{\lrcorner} \psi^-. \label{second induced action Sym^+_0} \\
 \widetilde{A_{e_i \star}}\bar{\nabla}_{e_i}S &=& \ast d\sigma - \delta S \mathbin{\lrcorner} \psi^-. \label{second induced action Sym^-}
 \end{eqnarray}
\end{proposition}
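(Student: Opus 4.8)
The plan is to reduce each of the three identities to the corresponding one in Proposition \ref{lem:herm} by isolating the single algebraic difference between the two extensions. Since $A_X$ anticommutes with $J$ (which follows from $\nabla_X(J^2)=0$, giving $A_XJ=-JA_X$), applying $J$ to one tensor factor conjugates one extension into the other. Writing $J\circ$ for the operation that multiplies a $2$-tensor by $J$ in its first factor (equivalently, left multiplies the associated endomorphism by $J$), one checks from $A_XJ=-JA_X$ and $(J\circ)^2=-\Id$ that
\[
\widetilde{A_{X\star}}=(J\circ)\,A_{X\star}\,(J\circ).
\]
In endomorphism language, viewing a $2$-tensor as $B\in\Gamma(\End TM)$ via the metric, this says that $\widetilde{A_{X\star}}$ is the anticommutator $B\mapsto A_XB+BA_X$ while $A_{X\star}$ is the commutator $B\mapsto[A_X,B]$, the two being related by $A_XB+BA_X=J[A_X,JB]$. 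The crucial structural point is that $J\circ$ interchanges $\Sym^+_0M$ with $\wedge^{(1,1)}_0M$ and preserves $\Sym^-M$, so the inner $A_{X\star}$ always acts on a section already covered by Proposition \ref{lem:herm}.

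The second ingredient is that $\bar\nabla J=0$ lets $J$ pass through the Hermitian derivative, so for a section $B$ of any of these bundles
\[
\widetilde{A_{e_i\star}}\bar\nabla_{e_i}B=(J\circ)\,A_{e_i\star}\bar\nabla_{e_i}(J\circ B).
\]
First I would treat $(\ref{second induced action Sym^+_0})$ as a warm-up: putting $\varphi\coloneqq J\circ h\in\wedge^{(1,1)}_0M$, identity $(\ref{herm wedge^{(1,1)}_0})$ gives $A_{e_i\star}\bar\nabla_{e_i}\varphi=-(J\delta\varphi)\mathbin{\lrcorner}\psi^+$, after which I left multiply by $J$. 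Here I use the type identity $\psi^+(JX,\cdot,\cdot)=-\psi^-(X,\cdot,\cdot)$ (equivalently $(Je_i)\mathbin{\lrcorner}\psi^+=-e_i\mathbin{\lrcorner}\psi^-$, from $\psi^++i\psi^-$ being of type $(3,0)$) to convert $Y\mathbin{\lrcorner}\psi^+$ into $Y\mathbin{\lrcorner}\psi^-$, together with the clean matching $\delta\varphi=\pm J\delta h$: the $\nabla J$-correction in $\delta\varphi$ drops out because it pairs the symmetric tensor $h$ against the totally skew $\psi^+$. Combining these lands exactly on $-\delta h\mathbin{\lrcorner}\psi^-$.

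For the remaining two identities the same mechanism forces a Hodge star into the answer, and this is the origin of the operator $\ast d$ on their right-hand sides. The key bookkeeping fact is that inserting $J$ before letting a symmetric endomorphism $C$ act on $\psi^+$ trades $\psi^+$ for $\psi^-$: from $(Je_i)\mathbin{\lrcorner}\psi^+=-e_i\mathbin{\lrcorner}\psi^-$ one obtains $(J\circ C)_\star\psi^+=-C_\star\psi^-$. For $(\ref{second induced action wedge^{(1,1)}_0})$ I set $h'\coloneqq J\circ w\in\Sym^+_0M$, so that with $\varphi'=g(Jh'\cdot,\cdot)=-w$ identity $(\ref{herm Sym^+})$ gives the $3$-form $C_\star\psi^+=2\delta h'\wedge\omega+4\,dw$ for $C=A_{e_i\star}\bar\nabla_{e_i}h'$; the desired left-hand side is then $-C_\star\psi^-$, and rewriting $\psi^-=\ast\psi^+$ and invoking the Hodge relations $(\ref{induced action S})$ and $(\ref{Hodge star wedge^{(1,1)}_0})$ turns $4\,dw$ into $4\ast dw$ and, after the matching $\delta h'=\pm J\delta w$ and $\ast(\alpha\wedge\omega)=J\alpha\wedge\omega$, turns $2\delta h'\wedge\omega$ into $2\delta w\wedge\omega$. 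Identity $(\ref{second induced action Sym^-})$ runs the same way, feeding $J\circ S\in\Sym^-M$ into $(\ref{herm Sym^-})$ and using $(\ref{induced action S})$ to supply the Hodge star; here the term $\ast d\sigma$ with $\sigma=S_\star\psi^+$ is produced out of the $\delta\bigl((J\circ S)_\star\psi^+\bigr)$ contribution.

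Throughout, the cross terms generated when $J$ is moved past $A_{e_i}$ and $\bar\nabla_{e_i}$ are eliminated by the Schur relations $(\ref{Schur w})$--$(\ref{Schur S'})$ and by the vanishing of pairings of symmetric tensors against $\psi^\pm$. I expect the main obstacle to be precisely the Hodge-star bookkeeping in $(\ref{second induced action wedge^{(1,1)}_0})$ and $(\ref{second induced action Sym^-})$: one must track carefully how $J\circ$, the interior products $e_i\mathbin{\lrcorner}$, and $\ast$ interact on the mixed-type forms $\delta S\mathbin{\lrcorner}\psi^+$, $\delta\sigma$ and $dw$, and verify that the signs conspire to give exactly $+4\ast dw$ and $+\ast d\sigma$ with the stated coefficients, with no residual $\psi^+$- or $\wedge\omega$-terms. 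A fully computational alternative, mirroring the derivation of Proposition \ref{lem:herm} in \cite{MoroianuSemmelmann} but with the sign of the second tensor factor reversed, would instead apply the identities $(\ref{AA})$--$(\ref{Hodge star wedge^{(1,1)}_0})$ directly and avoid the conjugation step, at the cost of repeating those computations.
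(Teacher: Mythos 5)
Your key algebraic identity is correct: with $J\circ$ acting on the first tensor factor one indeed has
\begin{equation*}
(J\circ)A_{X\star}(J\circ)(\alpha\otimes\beta)=JA_XJ\alpha\otimes\beta+J^2\alpha\otimes A_X\beta
=A_X\alpha\otimes\beta-\alpha\otimes A_X\beta=\widetilde{A_{X\star}}(\alpha\otimes\beta),
\end{equation*}
and $\bar{\nabla}J=0$ lets this slide past $\bar{\nabla}_{e_i}$. This is also a genuinely different route from the paper: the paper proves all three formulas by direct computation, using only $(\ref{herm wedge^{(1,1)}_0})$ from Proposition \ref{lem:herm} together with $(\ref{induced action on form})$--$(\ref{Hodge star wedge^{(1,1)}_0})$ and the Schur relations, and never invokes $(\ref{herm Sym^+})$ or $(\ref{herm Sym^-})$ at all. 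Your warm-up case does close: with $\varphi$ the $(1,1)$-form of $Jh$ one gets $J\circ h=\pm\varphi$, $(J\circ)(Y\mathbin{\lrcorner}\psi^+)=Y\mathbin{\lrcorner}\psi^-$, and $\delta\varphi=J\delta h$ (equivalently the relation $\delta h=-J\delta\varphi$ the paper uses), and these assemble to exactly $(\ref{second induced action Sym^+_0})$.

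The gap is that for the other two identities the deferred ``sign bookkeeping'' is not routine verification but the entire proof, and it does not close as sketched. For $(\ref{second induced action wedge^{(1,1)}_0})$: the two signs you leave open are coupled, not free. Writing $h'\coloneqq J\circ w$ and $\varphi'\coloneqq g(Jh'\cdot,\cdot)=\varepsilon w$ with $\varepsilon=\pm1$ (the value of $\varepsilon$ is determined by your convention for $J$ on $1$-forms), the relation $\delta\varphi'=J\delta h'$ forces $\delta h'=-\varepsilon J\delta w$. Your chain $\mathrm{LHS}=((J\circ)C)_\star\psi^+=-C_\star\psi^-=\ast(C_\star\psi^+)$ is fine (here $C=A_{e_i\star}\bar{\nabla}_{e_i}h'$ does lie in $\Sym^-M$, so $(\ref{induced action S})$ applies), but feeding in the paper's $(\ref{herm Sym^+})$ then yields $\ast\bigl(2\delta h'\wedge\omega-4\varepsilon\,dw\bigr)=2\varepsilon\,\delta w\wedge\omega-4\varepsilon\ast dw$, so for either choice of $\varepsilon$ exactly one of the two terms has the wrong sign; the claimed combination $2\delta w\wedge\omega+4\ast dw$ is never produced. (You cannot repair this by flipping the sign in $(J\circ)(Y\mathbin{\lrcorner}\psi^+)=\pm Y\mathbin{\lrcorner}\psi^-$ either, because the opposite sign breaks your warm-up case.) In effect your conjugation shows that $(\ref{second induced action wedge^{(1,1)}_0})$ is equivalent to $(\ref{herm Sym^+})$ with the opposite sign on the $d\varphi$-term, so it cannot be a consequence of $(\ref{herm Sym^+})$ as stated --- and $(\ref{herm Sym^+})$ is precisely the formula the paper warns is misstated in Moroianu--Semmelmann, so ``cite the reference and fix signs later'' is unavailable here; you would have to prove the needed version of $(\ref{herm Sym^+})$ yourself, which is essentially the paper's direct computation. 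For $(\ref{second induced action Sym^-})$ the assertion that it ``runs the same way'' skips the actual difficulty: after feeding $S'=J\circ S$ into $(\ref{herm Sym^-})$ you must push $J\circ$ through the composition $\circ J$ on its right-hand side, and conjugation by $J$ acts as $+1$ on $(2,0)+(0,2)$-forms but as $-1$ on $(1,1)$-forms. The outcome therefore depends on the decomposition of $\delta\sigma'=\ast d\sigma$ into types --- information $(\ref{herm Sym^-})$ does not supply, and exactly the subtlety the paper isolates in the proof of Theorem \ref{thmA} when it determines in which summands the terms of $(\ref{second induced action Sym^-})$ lie. Without that input your computation for $S$ is undetermined (and under the natural endomorphism reading of $\circ J$ it produces the negative of the claimed identity). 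So: a nice idea that genuinely proves $(\ref{second induced action Sym^+_0})$, but the reductions for $(\ref{second induced action wedge^{(1,1)}_0})$ and $(\ref{second induced action Sym^-})$ fail as written.
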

\begin{proof}
Before checking $(\ref{second induced action wedge^{(1,1)}_0})$, we first calculate
\begin{eqnarray}
 \ast dw &=& \ast (e_i \wedge \nabla_{e_i}w) \overset{(\ref{Schur w})}{=} \ast (e_i \wedge \bar{\nabla}_{e_i}w) \nonumber \\
 &\overset{(\ref{Hodge star wedge^{(1,1)}_0}),(\ref{Schur w'})}{=}& Je_i \wedge \bar{\nabla}_{e_i}w - \delta w \wedge \omega. \label{ast d w}
\end{eqnarray}
Let us prove the equation $(\ref{second induced action wedge^{(1,1)}_0})$.
\begin{eqnarray*}
 (\widetilde{A_{e_i \star}}\bar{\nabla}_{e_i}w)_{\star}\psi^+ &=& - (A_{e_i} \circ \bar{\nabla}_{e_i}w)_{\star}\psi^+ - (\bar{\nabla}_{e_i}w \circ A_{e_i})_{\star}\psi^+ \\
 &=& - (A_{e_i \star}\bar{\nabla}_{e_i}w)_{\star}\psi^+ - 2(\bar{\nabla}_{e_i}w \circ A_{e_i})_{\star}\psi^+ \\
 &\overset{(\ref{herm wedge^{(1,1)}_0}),(\ref{induced action on form})}{=}& (J\delta w \mathbin{\lrcorner} \psi^+)_{\star}\psi^+ + 2A_{e_i}\bar{\nabla}_{e_i}w(e_k) \wedge e_k \mathbin{\lrcorner} \psi^+\\
 &=& -A_{\delta w \star}\psi^+ + 2A_{e_i \star}(\bar{\nabla}_{e_i}w(e_k) \wedge e_k \mathbin{\lrcorner}\psi^+) \\
 &&-2\bar{\nabla}_{e_i}w(e_k) \wedge A_{e_i \star}(e_k \mathbin{\lrcorner} \psi^+) \\
 &\overset{(\ref{induced action on form}),(\ref{induced action on psi})}{=}& 2\delta w \wedge \omega + 2A_{e_i \star}((\bar{\nabla}_{e_i}w)_{\star}\psi^+) \\
 && -2\bar{\nabla}_{e_i}w(e_k) \wedge (-A_{e_i}e_k \mathbin{\lrcorner} \psi^+ + e_k \mathbin{\lrcorner} A_{e_i \star}\psi^+) \\
 &\overset{(\ref{induced action on psi})}{=}& 2\delta w \wedge \omega -2\bar{\nabla}_{e_i}w(e_k) \wedge JA_{A_{e_i}e_k} + 4(\bar{\nabla}_{e_i}w)_{\star}(e_i \wedge \omega) \\
 &\overset{(\ref{AA})}{=}& 2\delta w \wedge \omega -2\bar{\nabla}_{e_i}w(e_k) \wedge (e_i \wedge Je_k + Je_i \wedge e_k) \\
 && + 4(\bar{\nabla}_{e_i}w)e_i \wedge \omega + 4e_i \wedge (\bar{\nabla}_{e_i}w)_{\star}\omega \\
 &\overset{(\ref{induced action w}),(\ref{Schur w'})}{=}& 4Je_i \wedge \bar{\nabla}_{e_i}w - 2\delta w \wedge \omega \\
 &\overset{(\ref{ast d w})}{=}& 2\delta w \wedge \omega + 4 \ast dw.
\end{eqnarray*}

Next, for any vector field $X$, we get
\begin{eqnarray*}
 (\widetilde{A_{e_i \star}}\bar{\nabla}_{e_i}h)(X) &=& - A_{e_i}\bar{\nabla}_{e_i}h(X) - \bar{\nabla}_{e_i}h(A_{e_i}X) \\
 &=& -(\bar{\nabla}_{e_i}h)(X) \mathbin{\lrcorner} e_i \mathbin{\lrcorner} \psi^- - (\bar{\nabla}_{e_i}h)(X \mathbin{\lrcorner} e_i \mathbin{\lrcorner} \psi^-) \\
 &=& (\bar{\nabla}_{e_i}h)(e_j,X) e_i \mathbin{\lrcorner} e_j \mathbin{\lrcorner} \psi^- - \psi^-(e_i,X,e_j)(\bar{\nabla}_{e_i}h)e_j \\
 &=& ((\bar{\nabla}_{e_i}h)e_j \wedge (e_i \mathbin{\lrcorner} e_j \mathbin{\lrcorner} \psi^-))(X).
\end{eqnarray*}
Thus, we have
\[
\widetilde{A_{e_i \star}}\bar{\nabla}_{e_i}h = (\bar{\nabla}_{e_i}h)e_j \wedge (e_i \mathbin{\lrcorner} e_j \mathbin{\lrcorner} \psi^-).
\]
We proceed further with the calculation.
\begin{eqnarray*}
 \widetilde{A_{e_i \star}}\bar{\nabla}_{e_i}h &=& -e_i \mathbin{\lrcorner} ((\bar{\nabla}_{e_i}h)e_j \wedge e_j \mathbin{\lrcorner} \psi^-) + (e_i \mathbin{\lrcorner} (\bar{\nabla}_{e_i}h)e_j)e_j \mathbin{\lrcorner} \psi^- \\
 &\overset{(\ref{Schur h})}{=}& - e_i \mathbin{\lrcorner} \bar{\nabla}_{e_i} (h(e_j) \wedge e_j \mathbin{\lrcorner} \psi^-) - \delta h (e_j) e_j \mathbin{\lrcorner} \psi^- \\
 &=& e_i \mathbin{\lrcorner} \bar{\nabla}_{e_i} (h_{\star}\psi^-) - \delta h \mathbin{\lrcorner} \psi^- \\
 &\overset{(\ref{induced action h})}{=}& -\delta h \mathbin{\lrcorner} \psi^-.
\end{eqnarray*}
This proves $(\ref{second induced action Sym^+_0})$.

In the same way that we prove the identity $(\ref{second induced action Sym^+_0})$, we know that
\begin{equation} \label{calc second induced action Sym^-}
\widetilde{A_{e_i \star}}\bar{\nabla}_{e_i}S = (\bar{\nabla}_{e_i}S)e_j \wedge (e_i \mathbin{\lrcorner} e_j \mathbin{\lrcorner} \psi^-) \overset{(\ref{Schur S}),(\ref{Schur S'})}{=} -\delta(S_{\star}\psi^-) - \delta S \mathbin{\lrcorner} \psi^-.
\end{equation}
The first term of the equation $(\ref{calc second induced action Sym^-})$ is
\begin{eqnarray*}
 \delta(S_{\star}\psi^-) = - \ast d \ast (S_{\star}\psi^-) \overset{(\ref{induced action S})}{=} \ast d \ast^2 (S_{\star}\psi^+) = - \ast d \sigma.
\end{eqnarray*}
Thus, we obtain the equation $(\ref{second induced action Sym^-})$.
\end{proof}

\section{Comparison of differential operators} \label{differential operators}
Assume that $(M^6,g,J)$ is a 6-dimensional strict nearly K\"{a}hler manifold with the normalized scalar curvature $\scal = 30$. We defined the twisted Dirac operator $D_{TM}$ in Section \ref{Preliminalies}. Then we have a natural 2nd order differential operator: the standard Laplace operator $\Delta \coloneqq \nabla^{\ast}\nabla + q(R)$, that is the sum of the rough Laplacian and the curvature endomorphism in $(\ref{curvature endo EM})$. Similarly, we define the twisted Dirac operator for the canonical Hermitian connection as $\overline{D_{TM}} \coloneqq (e_k \cdot \otimes \Id_{TM^{\C}}) \circ \bar{\nabla}_{e_k}$ and the Hermitian Laplace operator as $\bar{\Delta} \coloneqq \bar{\nabla}^{\ast}\bar{\nabla} + q(\bar{R})$. These Laplace operators are introduced in \cite{MoroianuSemmelmann2} and \cite{SemmelmannWeingart}. We shall study the relationship between the twisted Dirac operator and the Hermitian Laplace operator.

We get the difference below between the twisted Dirac operator for the Levi-Civita connection and the one for the canonical Hermitian connection.

\begin{theorem}
On the sections of $S_{1/2} \otimes TM$, the relation holds:
\begin{equation} \label{twisted first}
 \overline{D_{TM}} = D_{TM} - \frac{3}{4}\psi^- \cdot \otimes \Id - \frac{1}{2} e_i \cdot \otimes A_{e_i}.
\end{equation}
\end{theorem}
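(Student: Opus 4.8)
The plan is to reduce the statement to the Leibniz rule on $S_{1/2} \otimes TM$ together with a single Clifford-algebra contraction. First I would record how $\bar{\nabla}$ differs from $\nabla$ on $S_{1/2} \otimes TM$. Combining the spinorial formula $(\ref{herm conn S})$ with the tangent-bundle formula $(\ref{herm conn TM})$ and applying the Leibniz rule to a decomposable section $\zeta \otimes Y$, one obtains
\[
\bar{\nabla}_X = \nabla_X - \frac{1}{4}\big((X \mathbin{\lrcorner} \psi^-) \cdot\big) \otimes \Id - \frac{1}{2}\,\Id \otimes A_X
\]
as operators on $\Gamma(S_{1/2} \otimes TM)$, where the middle term acts by Clifford multiplication on the spinor factor and the last term by $A_X$ on the $TM$ factor.

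Next I would substitute this into the definition $\overline{D_{TM}} = (e_k \cdot \otimes \Id) \circ \bar{\nabla}_{e_k}$ and split the result into three sums, one for each term above. The $\nabla_{e_k}$-sum is by definition $D_{TM}$. The $A$-sum is immediate, since $(e_k \cdot \otimes \Id) \circ (\Id \otimes A_{e_k}) = e_k \cdot \otimes A_{e_k}$, contributing $-\tfrac{1}{2} e_i \cdot \otimes A_{e_i}$ in the summation convention. The only remaining piece is the $\psi^-$-sum, which, after composing the two Clifford factors in the spinor slot, reads $-\tfrac{1}{4}\big(\sum_k e_k \cdot (e_k \mathbin{\lrcorner} \psi^-)\big) \cdot \otimes \Id$.

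The crux is therefore the evaluation of the Clifford element $\sum_k e_k \cdot (e_k \mathbin{\lrcorner} \psi^-)$. I would use the identification $X \cdot \beta = X \wedge \beta - X \mathbin{\lrcorner} \beta$ of the Clifford action of a vector on a form, which gives
\[
\sum_k e_k \cdot (e_k \mathbin{\lrcorner} \psi^-) = \sum_k e_k \wedge (e_k \mathbin{\lrcorner} \psi^-) - \sum_k e_k \mathbin{\lrcorner} (e_k \mathbin{\lrcorner} \psi^-).
\]
The second sum vanishes because $e_k \mathbin{\lrcorner} e_k \mathbin{\lrcorner} = 0$ for each $k$, while the first sum equals $(\deg \psi^-)\,\psi^- = 3\psi^-$ by the degree identity $\sum_k e_k \wedge (e_k \mathbin{\lrcorner} \beta) = p\,\beta$ on $p$-forms. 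Hence the $\psi^-$-sum contributes $-\tfrac{3}{4}\psi^- \cdot \otimes \Id$, and assembling the three pieces yields $(\ref{twisted first})$. The argument is short and essentially computational; the only point requiring care is to recognize that the degree-$3$ form $\psi^-$ re-emerges with coefficient equal to its degree, the double-contraction term dropping out so that no lower-degree Clifford correction survives.
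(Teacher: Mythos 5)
Your proposal is correct and is precisely the computation the paper's proof alludes to: the paper simply declares the theorem a "straightforward consequence" of $(\ref{herm conn TM})$ and $(\ref{herm conn S})$, and your argument — Leibniz rule on $S_{1/2}\otimes TM$, substitution into the definition of $\overline{D_{TM}}$, and the Clifford contraction $\sum_k e_k \cdot (e_k \mathbin{\lrcorner} \psi^-) = 3\psi^-$ via the identity $X \cdot \beta = X \wedge \beta - X \mathbin{\lrcorner} \beta$ — supplies exactly those omitted details, with all signs and coefficients matching $(\ref{twisted first})$.
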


\begin{proof}
The theorem is a straightforward consequence of $(\ref{herm conn TM})$ and $(\ref{herm conn S})$.
\end{proof}

We next see the relationship between the Hermitian Laplace operator and the twisted Dirac operator for the canonical Hermitian connection.

\begin{lemma}
The identity holds:
\begin{equation} \label{lichnerowicz lemma}
e_j \cdot \bar{R}_{S}(X,e_j)\zeta = -\frac{1}{2}\overline{\Ric}(X)\cdot \zeta - X\cdot \zeta + JX \cdot \omega \cdot \zeta,
\end{equation}
for any vector field $X$ and spinor $\zeta$, where $\bar{R}_{S}$ denotes the curvature tensor for the canonical Hermitian connection on the spinor bundle $S_{1/2}$.
\end{lemma}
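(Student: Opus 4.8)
The plan is to work entirely with the spin lift of the Hermitian curvature. Since $\bar\nabla$ is a metric connection, its curvature on spinors is still given by the usual Clifford formula
\[
\bar R_S(X,e_j)\zeta = \tfrac14\, \bar R_{Xe_jkl}\, e_k\cdot e_l\cdot\zeta,
\qquad \bar R_{Xe_jkl}:=g(\bar R(X,e_j)e_k,e_l),
\]
so that Clifford-multiplying by $e_j$ and summing reduces the left-hand side of (\ref{lichnerowicz lemma}) to the single contraction $\tfrac14\sum_{j,k,l}\bar R_{Xe_jkl}\,e_j\cdot e_k\cdot e_l\cdot\zeta$. The first step is to insert the Clifford identity
\[
e_j\cdot e_k\cdot e_l = e_j\wedge e_k\wedge e_l - \delta_{jk}e_l + \delta_{jl}e_k - \delta_{kl}e_j,
\]
which splits this expression into a contracted (degree one) part and a totally antisymmetric (degree three) part; the last term drops out because $\bar R_{Xe_jkl}$ is skew in $(k,l)$.

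For the degree one part, the $\delta_{jl}e_k$ term is, up to the sign coming from $\bar R(X,e_j)=-\bar R(e_j,X)$, exactly $-\overline{\Ric}(X,e_k)e_k$ by the definition of $\overline{\Ric}$. The $\delta_{jk}e_l$ term is the \emph{wrong-order} trace $\sum_j \bar R_{Xe_je_je_l}$, for which pair symmetry is unavailable because $\bar\nabla$ has torsion; here I would substitute (\ref{hermitian curvature difference}), reducing it to the pair-symmetric Levi-Civita trace (which equals $\Ric(X,e_l)=5g(X,e_l)$) plus an algebraic term that contracts to $-g(X,e_l)$, giving $\sum_j\bar R_{Xe_je_je_l}=4g(X,e_l)=\overline{\Ric}(X,e_l)$. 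With the overall factor $\tfrac14$, the two contracted terms combine to the first summand $-\tfrac12\overline{\Ric}(X)\cdot\zeta$ of (\ref{lichnerowicz lemma}).

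The heart of the proof is the degree three part $\tfrac14\sum_{j,k,l}\bar R_{Xe_jkl}\,e_j\wedge e_k\wedge e_l$. Since $e_j\wedge e_k\wedge e_l$ is totally skew, only the totally antisymmetric part of $\bar R_{Xe_jkl}$ in $(j,k,l)$ survives, and this is precisely the first Bianchi defect. For a torsion-free connection it vanishes, but here it is governed by (\ref{hermitian Bianchi identity}); equivalently, substituting (\ref{hermitian curvature difference}) makes the Levi-Civita part disappear by its own first Bianchi identity, leaving only the explicit $J$-correction. Contracting that correction against $e_j\wedge e_k\wedge e_l$, each of its three terms produces a multiple of $JX\wedge\omega$, and after collecting the constants the degree three part equals $(JX\wedge\omega)\cdot\zeta$.

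It remains to rewrite this $3$-form action in the form of the statement. Using $JX\cdot\omega = JX\wedge\omega - JX\lrcorner\omega$ together with $JX\lrcorner\omega = -X$ (because $\omega(JX,\cdot)=g(J^2X,\cdot)=-X$), one gets $(JX\wedge\omega)\cdot\zeta = JX\cdot\omega\cdot\zeta - X\cdot\zeta$; adding the degree one contribution yields exactly (\ref{lichnerowicz lemma}). I expect the degree three computation to be the main obstacle: the naive Levi-Civita intuition that this part vanishes fails precisely here, so one must correctly match the surviving antisymmetric part with the torsion term in (\ref{hermitian Bianchi identity}) and track the combinatorial constants and Clifford signs without error.
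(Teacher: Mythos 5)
Your proposal is correct and takes essentially the same route as the paper's proof: both reduce the left-hand side to the Clifford contraction $\tfrac14\sum\bar R_{Xjkl}\,e_j\cdot e_k\cdot e_l\cdot\zeta$, extract the Hermitian Ricci from the contracted (degree-one) terms, and evaluate the totally antisymmetric (degree-three) part — the first Bianchi defect — via $(\ref{hermitian Bianchi identity})$, obtaining $JX\wedge\omega$, which converts to $JX\cdot\omega\cdot\zeta - X\cdot\zeta$ exactly as you describe. The only difference is organizational (the paper symmetrizes $3\bar R_{Xjkl}e_je_ke_l$ into three cyclic relabelings and reorders Clifford products, rather than using your wedge-plus-contraction splitting), and your auxiliary trace claims, including the wrong-order trace $\sum_j\bar R_{Xe_je_je_l}=4g(X,e_l)=\overline{\Ric}(X,e_l)$ obtained from $(\ref{hermitian curvature difference})$, all check out.
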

\begin{proof}
Using the Clifford relation, we get
\begin{equation*}
 \begin{split}
 12 e_j \cdot \bar{R}_{S}(X,e_j) &= 3g(\bar{R}(X, e_j)e_k,e_l)e_je_ke_l = 3g(X,e_i)\bar{R}_{ijkl}e_je_ke_l \\
 &= g(X,e_i)(\bar{R}_{ijkl}e_je_ke_l + \bar{R}_{iklj}e_ke_le_j + \bar{R}_{iljk}e_le_je_k) \\
 &= g(X,e_i)\bar{R}_{ijkl}e_je_ke_l + g(X,e_i)\bar{R}_{iklj}(-2\delta_{lj}e_k + 2\delta_{kj}e_l + e_je_ke_l) \\
 &\quad + g(X,e_i)\bar{R}_{iljk}(-2\delta_{lj}e_k + 2\delta_{lk}e_j + e_je_ke_l) \\
 &= g(X,e_i)(\bar{R}_{ijkl} + \bar{R}_{iklj} + \bar{R}_{iljk})e_je_ke_l - 6\overline{\Ric}(X,e_k)e_k.
 \end{split}
\end{equation*}
From the equation $(\ref{hermitian Bianchi identity})$, it is straightforward to show that
\begin{equation*}
 \begin{split}
 &g(X,e_i)(\bar{R}_{ijkl} + \bar{R}_{iklj} + \bar{R}_{iljk})e_je_ke_l \\
 &= -2g(Je_j,e_k)g(Je_l,X)e_je_ke_l - 2g(Je_k,e_l)g(Je_j,X)e_je_ke_l \\
 &\quad - 2g(Je_l,e_j)g(Je_k,X)e_je_ke_l \\ 
 &= -12X + 12JX \cdot \omega.
 \end{split}
\end{equation*}
Combining the above equations, we arrive at the required identity $(\ref{lichnerowicz lemma})$.
\end{proof}

The next equation follows immediately from $(\ref{lichnerowicz lemma})$.
\begin{equation} \label{lichnerowicz lemma2}
e_i \cdot e_j \cdot \bar{R}_{S}(e_i,e_j) = 18+2\omega \cdot \omega \cdot .
\end{equation}
From this equation, for the curvature endomorphism $q_S(\bar{R})$ on the spinor bundle $S_{1/2}$, it follows that
\[
q_S(\bar{R}) = \frac{9}{2} + \frac{1}{2}\omega \cdot \omega \cdot.
\]

\begin{theorem}
Between the square of the twisted Dirac operator and the Hermitian Laplace operator, we obtain the relation:
\begin{equation} \label{Lichnerowicz}
\overline{D_{TM}}^2 = \bar{\Delta}_{S \otimes T} + \frac{1}{2} + \frac{1}{2}\omega \cdot \omega \cdot \otimes \Id + (e_j \mathbin{\lrcorner}\psi^- \cdot \otimes \Id)\bar{\nabla}_{e_j} .
\end{equation}
\end{theorem}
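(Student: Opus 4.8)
The plan is to derive a Bochner--Weitzenb\"ock expansion of $\overline{D_{TM}}^2$ and then match the resulting curvature and first-order contributions against the Hermitian Laplacian. First I would expand $\overline{D_{TM}}^2 = e_i\cdot\bar{\nabla}_{e_i}(e_j\cdot\bar{\nabla}_{e_j})$, using that $\bar{\nabla}$ is metric and compatible with Clifford multiplication on both factors of $S_{1/2}\otimes TM$, into $e_i\cdot(\bar{\nabla}_{e_i}e_j)\cdot\bar{\nabla}_{e_j} + e_i\cdot e_j\cdot\bar{\nabla}_{e_i}\bar{\nabla}_{e_j}$. Writing the second derivative as $\bar{\nabla}_{e_i}\bar{\nabla}_{e_j}=\bar{\nabla}^2_{e_i,e_j}+\bar{\nabla}_{\nabla_{e_i}e_j}$ with the Levi--Civita connection on the base, the terms involving $\nabla_{e_i}e_j$ cancel against $e_i\cdot(\nabla_{e_i}e_j)\cdot\bar{\nabla}_{e_j}$ by orthonormality of $\{e_i\}$, while the torsion of $\bar{\nabla}$, encoded by $\bar{\nabla}_{e_i}e_j-\nabla_{e_i}e_j=-\tfrac12 A_{e_i}e_j$ from \eqref{herm conn TM}, survives. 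Splitting $e_i\cdot e_j\cdot\bar{\nabla}^2_{e_i,e_j}$ into its symmetric and skew parts then yields
\[
\overline{D_{TM}}^2 = \bar{\nabla}^*\bar{\nabla} + \tfrac12\,e_i\cdot e_j\cdot\,\bar{R}_{S\otimes T}(e_i,e_j) - \tfrac12\,e_i\cdot(A_{e_i}e_j)\cdot\,\bar{\nabla}_{e_j},
\]
where $\bar{\nabla}^*\bar{\nabla}=-\bar{\nabla}^2_{e_i,e_i}$ is the rough Laplacian and $\bar{R}_{S\otimes T}$ is the curvature of the induced Hermitian connection.

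Next I would identify the curvature term with $q_{S\otimes T}(\bar{R})$ up to an explicit shift. Decomposing $\bar{R}_{S\otimes T}=\bar{R}_S\otimes\Id+\Id\otimes\bar{R}_T$ and comparing with the expansion of $q_{S\otimes T}(\bar{R})$ produced by $(e_i\wedge e_j)_\star=(e_i\wedge e_j)_\star\otimes\Id+\Id\otimes(e_i\wedge e_j)_\star$ in \eqref{curvature endo EM}, the mixed $S$--$T$ curvature terms of the two expressions coincide, so that
\[
\tfrac12\,e_i\cdot e_j\cdot\,\bar{R}_{S\otimes T}(e_i,e_j) = q_{S\otimes T}(\bar{R}) + q_S(\bar{R})\otimes\Id - \Id\otimes q_T(\bar{R}).
\]
The pure-spinor contribution $\tfrac12\,e_i\cdot e_j\cdot\bar{R}_S(e_i,e_j)=9+\omega\cdot\omega\cdot$ is read off from \eqref{lichnerowicz lemma2} (itself a consequence of \eqref{lichnerowicz lemma}), giving $q_S(\bar{R})=\tfrac92+\tfrac12\,\omega\cdot\omega\cdot$, while $q_T(\bar{R})=\overline{\Ric}=4\,\Id$ on the tangent factor. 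The scalar parts then collapse as $\tfrac92-4=\tfrac12$, producing $q_{S\otimes T}(\bar{R})+\tfrac12+\tfrac12\,\omega\cdot\omega\cdot\otimes\Id$.

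It remains to rewrite the first-order torsion term. Since $A_X$ is skew as an endomorphism and $A_X Y=-A_Y X$, the tensor $g(A_{e_i}e_j,e_k)$ is totally antisymmetric and equals the components of $\psi^-$; substituting this and using the Clifford identification of the $2$-form $e_j\mathbin{\lrcorner}\psi^-$ with $\tfrac12\psi^-_{jik}\,e_i\cdot e_k\cdot$, I would obtain $-\tfrac12\,e_i\cdot(A_{e_i}e_j)\cdot=e_j\mathbin{\lrcorner}\psi^-\cdot$, hence the last summand $(e_j\mathbin{\lrcorner}\psi^-\cdot\otimes\Id)\bar{\nabla}_{e_j}$. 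This is the exact analogue of how the $-\tfrac34\psi^-$ term arose in \eqref{twisted first}. Combining the three pieces with $\bar{\Delta}_{S\otimes T}=\bar{\nabla}^*\bar{\nabla}+q_{S\otimes T}(\bar{R})$ gives \eqref{Lichnerowicz}.

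The main obstacle I expect is the curvature bookkeeping of the second step: one must verify that the mixed $S$--$T$ terms in $\tfrac12\,e_i\cdot e_j\cdot\bar{R}_{S\otimes T}(e_i,e_j)$ reproduce exactly those of $q_{S\otimes T}(\bar{R})$, and correctly track the discrepancy between the pure-spinor part of the Dirac square and the definition of $q_S(\bar{R})$, so that the various scalar curvature contributions reduce to the single constant $\tfrac12$. This is precisely where the normalization $\scal=30$, the contracted curvature identity \eqref{lichnerowicz lemma}, and the value $\overline{\Ric}=4g$ enter, and it is the only place where genuine care is required; the expansion of $\overline{D_{TM}}^2$ and the torsion computation are routine once the relation $g(A_{e_i}e_j,e_k)=\psi^-_{ijk}$ is in hand.
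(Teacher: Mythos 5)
Your proof is correct and follows essentially the same route as the paper: the paper likewise starts from the Weitzenb\"ock expansion of $\overline{D_{TM}}^2$ with torsion term $(e_j \mathbin{\lrcorner}\psi^-\cdot\otimes\Id)\bar{\nabla}_{e_j}$ (citing Homma for the routine computation you carry out explicitly), and then compares the curvature term with $q_{S\otimes T}(\bar{R})$ by splitting off the pure parts $q_S(\bar{R})=\tfrac{9}{2}+\tfrac{1}{2}\omega\cdot\omega\cdot$ and $q_T(\bar{R})=\overline{\Ric}=4\Id$, so that the same scalar collapse $\tfrac{9}{2}-4=\tfrac{1}{2}$ appears in both arguments. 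The only cosmetic difference is that you package the curvature bookkeeping as the identity $\tfrac{1}{2}e_i\cdot e_j\cdot\bar{R}_{S\otimes T}(e_i,e_j)=q_{S\otimes T}(\bar{R})+q_S(\bar{R})\otimes\Id-\Id\otimes q_T(\bar{R})$, whereas the paper substitutes the expansion of $q_{S\otimes T}(\bar{R})$ into its Weitzenb\"ock formula directly.
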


\begin{proof}
Calculating the same as the Lichnerowicz formula (cf. \cite[p.107]{Yasushi}), we get
\[
\overline{D_{TM}}^2 = \bar{\nabla}^{\ast}\bar{\nabla} + (e_j \mathbin{\lrcorner} \psi^- \cdot \otimes \Id)\bar{\nabla}_{e_j} + \frac{1}{2} e_je_k \{ \bar{R}_{S}(e_j,e_k) \otimes \Id + \Id \otimes \bar{R}(e_j,e_k) \}.
\]
Applying the equation $(\ref{lichnerowicz lemma2})$, we find that the above equation becomes
\begin{equation} \label{Lichnerowicz''}
\overline{D_{TM}}^2 = \bar{\nabla}^{\ast}\bar{\nabla} +9 + \omega \cdot \omega \cdot \otimes \Id + (e_j \mathbin{\lrcorner} \psi^- \otimes \Id)\bar{\nabla}_{e_j} + \frac{1}{2} e_je_k \otimes \bar{R}(e_j,e_k).
\end{equation}
Next, we compute the curvature endomorphism $q_{S \otimes T}(\bar{R})$, which is the curvature term of the Hermitian Laplace operator $\bar{\Delta}_{S \otimes T}$.
\begin{equation*}
 \begin{split}
 q_{S \otimes T}(\bar{R}) &= \frac{1}{2}\bar{R}(e_i \wedge e_j)_{\star} \otimes (e_i \wedge e_j)_{\star} + \frac{1}{2} (e_i \wedge e_j)_{\star} \otimes \bar{R}(e_i \wedge e_j)_{\star} \\
 &\quad + q_S(\bar{R}) \otimes \Id + \Id \otimes q_T(\bar{R}) \\
 &= \frac{9}{2} + \frac{1}{2}\omega \cdot \omega \otimes \Id + \frac{1}{2}e_je_k \otimes \bar{R}(e_j,e_k) + \Id \otimes \overline{\Ric}.
 \end{split}
\end{equation*}
Here, we use the equations $q_S(\bar{R}) = \frac{9}{2} + \frac{1}{2} \omega \cdot \omega \cdot$ and $q_T(\bar{R}) = \overline{\Ric} = 4 \Id$. Substituting the above equation into the equation $(\ref{Lichnerowicz''})$, we obtain the relation $(\ref{Lichnerowicz})$.
\end{proof}

The following lemma is obtained directly from $(\ref{herm conn TM})$, $(\ref{herm conn S})$, and the definition of the rough Laplacian $\nabla^{\ast}\nabla = - \nabla_{e_i}\nabla_{e_i} + \nabla_{\nabla_{e_i}e_i}$.

\begin{lemma} \label{lem:rough laplacian2}
On the sections of $S_{1/2} \otimes TM$, we have
\begin{equation} \label{rough laplacian2}
\bar{\nabla}^{\ast}\bar{\nabla} = \nabla^{\ast}\nabla + \frac{5}{8} + \frac{1}{8}\omega\cdot\omega\cdot \otimes \Id + (\Id \otimes A_{e_i})\nabla_{e_i} + \frac{1}{2}(e_i \mathbin{\lrcorner} \psi^- \cdot\otimes\Id)\bar{\nabla}_{e_i}.
\end{equation}
\end{lemma}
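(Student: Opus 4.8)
The plan is to deduce everything from the single algebraic relation between the two connections on $S_{1/2} \otimes TM$. Combining $(\ref{herm conn TM})$ and $(\ref{herm conn S})$ and applying the Leibniz rule on the tensor product, one has $\bar{\nabla}_X = \nabla_X - B_X$ on $S_{1/2} \otimes TM$, where
\[
B_X = \tfrac{1}{4}(X \mathbin{\lrcorner} \psi^- \cdot \otimes \Id) + \tfrac{1}{2}(\Id \otimes A_X)
\]
is a $1$-form valued in $\End(S_{1/2} \otimes TM)$. Since both $\nabla$ and $\bar{\nabla}$ are metric, I would substitute $\bar{\nabla} = \nabla - B$ into $\bar{\nabla}^{\ast}\bar{\nabla} = -\bar{\nabla}_{e_i}\bar{\nabla}_{e_i} + \bar{\nabla}_{\nabla_{e_i}e_i}$, the form of the rough Laplacian quoted in the lemma, in which the frame-divergence correction uses the Levi-Civita connection. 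Expanding at a point where $\nabla_{e_i}e_j = 0$ gives the general comparison formula
\[
\bar{\nabla}^{\ast}\bar{\nabla} = \nabla^{\ast}\nabla + (\nabla_{e_i}B)(e_i) + 2B_{e_i}\nabla_{e_i} - B_{e_i}B_{e_i},
\]
where $(\nabla_{e_i}B)(e_i)$ is the covariant divergence of the endomorphism-valued $1$-form $B$.

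Next I would separate the first-order from the zeroth-order contributions. The first-order term splits as $2B_{e_i}\nabla_{e_i} = (\Id \otimes A_{e_i})\nabla_{e_i} + \tfrac{1}{2}(e_i \mathbin{\lrcorner} \psi^- \cdot \otimes \Id)\nabla_{e_i}$. To match the right-hand side of $(\ref{rough laplacian2})$, which carries $\bar{\nabla}_{e_i}$ in the $\psi^-$-term, I rewrite $\nabla_{e_i} = \bar{\nabla}_{e_i} + B_{e_i}$ in the second summand; this produces the desired term $\tfrac{1}{2}(e_i \mathbin{\lrcorner} \psi^- \cdot \otimes \Id)\bar{\nabla}_{e_i}$ together with one further zeroth-order piece $\tfrac{1}{2}(e_i \mathbin{\lrcorner} \psi^- \cdot \otimes \Id)B_{e_i}$. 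The whole task then reduces to the zeroth-order identity
\[
(\nabla_{e_i}B)(e_i) - B_{e_i}B_{e_i} + \tfrac{1}{2}(e_i \mathbin{\lrcorner} \psi^- \cdot \otimes \Id)B_{e_i} = \tfrac{5}{8} + \tfrac{1}{8}\omega \cdot \omega \cdot \otimes \Id .
\]

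For the divergence term I would use $g(A_X Y, Z) = \psi^-(X,Y,Z)$ (the normalization of $A$ already used in the proof of Proposition \ref{prop; action 2-tensor}) together with the co-closedness $\delta \psi^- = 0$. Then both $\sum_i e_i \mathbin{\lrcorner} \nabla_{e_i}\psi^-$ and the divergence of $A$ vanish, so $(\nabla_{e_i}B)(e_i) = 0$. Writing $B_{e_i} = \tfrac{1}{4}P_i + \tfrac{1}{2}Q_i$ with $P_i = e_i \mathbin{\lrcorner}\psi^- \cdot \otimes \Id$ and $Q_i = \Id \otimes A_{e_i}$, the factors $P_i$ and $Q_i$ commute, so the cross terms of $-B_{e_i}B_{e_i}$ cancel exactly against the $P_iQ_i$-part of the correction $\tfrac{1}{2}P_iB_{e_i}$. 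What remains is $\tfrac{1}{16}\sum_i P_iP_i - \tfrac{1}{4}\sum_i Q_iQ_i$. The tangent part is elementary: the identity for $A_Z A_X Y$ preceding $(\ref{AA})$ gives $\sum_i A_{e_i}A_{e_i} = -4\,\Id$, so $-\tfrac{1}{4}\sum_i Q_iQ_i = +\Id$.

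The one genuinely non-formal step, which I expect to be the main obstacle, is the spinorial Clifford identity
\[
\sum_i (e_i \mathbin{\lrcorner} \psi^-) \cdot (e_i \mathbin{\lrcorner} \psi^-) \cdot = -6 + 2\,\omega \cdot \omega \cdot ,
\]
whose right-hand side supplies (after the factor $\tfrac{1}{16}$) both the constant $-\tfrac{3}{8}$ and the full $\tfrac{1}{8}\omega \cdot \omega \cdot$; adding the $+\Id$ from the tangent part then yields exactly $\tfrac{5}{8} + \tfrac{1}{8}\omega \cdot \omega \cdot$. I would prove this identity by reducing to the flat $\SU(3)$-model: the left-hand side is an $\SU(3)$-invariant endomorphism built only from $\psi^-$ and the metric, hence pointwise determined, and expanding each $2$-form $e_i \mathbin{\lrcorner} \psi^-$ as a Clifford bivector and summing the squares reproduces precisely the scalar and $4$-form parts of $-6 + 2\omega \cdot \omega \cdot$. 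Alternatively, by Schur's lemma applied to the decomposition $(\ref{irrep decomp S_{frac{1}{2}}})$ it suffices to evaluate the two eigenvalues on $\kappa$ and on $X \cdot \kappa$ using the Clifford relations $(\ref{5-form})$--$(\ref{2-form action killing2})$. Assembling the three contributions then gives $(\ref{rough laplacian2})$.
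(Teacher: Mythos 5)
Your proposal is correct and takes essentially the same approach as the paper: the paper's proof of Lemma~\ref{lem:rough laplacian2} is only the remark that it follows directly from \eqref{herm conn TM}, \eqref{herm conn S} and the definition of the rough Laplacian, which is precisely the expansion $\bar{\nabla}=\nabla-B$ that you carry out. The details you supply --- the vanishing of the divergence term via $\delta\psi^{-}=0$, the cancellation of the cross terms, $\sum_i A_{e_i}A_{e_i}=-4\,\Id$, and the Clifford identity $\sum_i (e_i\mathbin{\lrcorner}\psi^{-})\cdot(e_i\mathbin{\lrcorner}\psi^{-})\cdot = -6+2\,\omega\cdot\omega\cdot$, all of which check out against the paper's conventions --- are exactly what the paper leaves implicit.
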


Finally, we see the difference in the square of the twisted Dirac operator for each connection.

\begin{theorem}
On the sections of $S_{1/2} \otimes TM$, we obtain the relation between the squares of the two twisted Dirac operators $D_{TM}$ and $\overline{D_{TM}}$:
\begin{equation} \label{twisted second}
 \begin{split}
 \overline{D_{TM}}^2 =& {D_{TM}}^2 + \frac{17}{8} + \frac{9}{8}\omega \cdot \omega \cdot \otimes \Id + \frac{3}{2} (e_j \mathbin{\lrcorner} \psi^{-} \cdot \otimes \Id)\bar{\nabla}_{e_j} \\
 &+ (\Id \otimes A_{e_j})\nabla_{e_j} + \frac{1}{2} e_j \cdot e_k \cdot \otimes (\bar{R}(e_j,e_k) - R(e_j,e_k)).
 \end{split}
\end{equation}
In particular, for a local section $\alpha^{(i)} \otimes e_i$ of $S_{1/2} \otimes TM$, we have
\begin{equation*}
 \begin{split}
 \overline{D_{TM}}^2(\alpha^{(i)} \otimes e_i) &= {D_{TM}}^2(\alpha^{(i)} \otimes e_i) - \frac{7}{8}\alpha^{(i)} \otimes e_i + \frac{9}{8}\omega\cdot\omega\cdot\alpha^{(i)} \otimes e_i + \frac{1}{2}\omega\cdot\alpha^{(i)} \otimes Je_i \\
 &\quad +\frac{3}{2}(e_j \mathbin{\lrcorner} \psi^- \cdot \otimes \Id)\bar{\nabla}_{e_j}(\alpha^{(i)} \otimes e_i) + (\Id\otimes A_{e_j})\bar{\nabla}_{e_j}(\alpha^{(i)} \otimes e_i) \\
 &\quad + \frac{1}{4}(e_j \mathbin{\lrcorner} \psi^- \cdot \otimes A_{e_j})(\alpha^{(i)} \otimes e_i) \\
 &\quad - \frac{1}{4} e_j \cdot e_i \cdot \alpha^{(i)} \otimes e_j - \frac{3}{4} Je_i \cdot Je_j \cdot \alpha^{(i)} \otimes e_j.
 \end{split}
\end{equation*}
\end{theorem}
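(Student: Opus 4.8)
The plan is to prove the operator identity $(\ref{twisted second})$ first and then evaluate it on the local section $\alpha^{(i)}\otimes e_i$. For the identity I would begin from the intermediate Lichnerowicz form $(\ref{Lichnerowicz''})$, namely $\overline{D_{TM}}^2 = \bar{\nabla}^{\ast}\bar{\nabla} + 9 + \omega\cdot\omega\cdot\otimes\Id + (e_j\mathbin{\lrcorner}\psi^-\cdot\otimes\Id)\bar{\nabla}_{e_j} + \frac12 e_je_k\otimes\bar{R}(e_j,e_k)$, which already isolates $\bar{\nabla}^{\ast}\bar{\nabla}$ and the $TM$-curvature of $\bar{\nabla}$. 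Substituting Lemma $(\ref{rough laplacian2})$ to replace $\bar{\nabla}^{\ast}\bar{\nabla}$ by $\nabla^{\ast}\nabla$ produces the additional scalar $\frac58$, the additional $\frac18\omega\cdot\omega\cdot$, the term $(\Id\otimes A_{e_i})\nabla_{e_i}$, and a further $\frac12(e_i\mathbin{\lrcorner}\psi^-\cdot\otimes\Id)\bar{\nabla}_{e_i}$ that merges with the existing $\psi^-$ term into the coefficient $\frac32$. I would then invoke the ordinary torsion-free twisted Lichnerowicz formula $D_{TM}^2 = \nabla^{\ast}\nabla + \frac{\scal}{4} + \frac12 e_j\cdot e_k\cdot\otimes R(e_j,e_k) = \nabla^{\ast}\nabla + \frac{15}{2} + \frac12 e_je_k\otimes R(e_j,e_k)$ to eliminate $\nabla^{\ast}\nabla$; collecting the scalars $9 + \frac58 - \frac{15}{2} = \frac{17}{8}$ and the curvature into $\frac12 e_je_k\otimes(\bar{R}-R)(e_j,e_k)$ then yields $(\ref{twisted second})$.

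For the explicit formula I would apply $(\ref{twisted second})$ to $\phi = \alpha^{(i)}\otimes e_i$. The scalar term, the $\omega\cdot\omega\cdot$ term, and the $\frac32(e_j\mathbin{\lrcorner}\psi^-\cdot\otimes\Id)\bar{\nabla}_{e_j}$ term pass through verbatim. The term $(\Id\otimes A_{e_j})\nabla_{e_j}\phi$ must be rewritten in terms of $\bar{\nabla}$: from $(\ref{herm conn TM})$ and $(\ref{herm conn S})$ one has $\nabla_{e_j}-\bar{\nabla}_{e_j} = \frac14(e_j\mathbin{\lrcorner}\psi^-\cdot\otimes\Id) + \frac12(\Id\otimes A_{e_j})$ on $S_{1/2}\otimes TM$, whence $(\Id\otimes A_{e_j})\nabla_{e_j}\phi = (\Id\otimes A_{e_j})\bar{\nabla}_{e_j}\phi + \frac14(e_j\mathbin{\lrcorner}\psi^-\cdot\otimes A_{e_j})\phi + \frac12(\Id\otimes A_{e_j}A_{e_j})\phi$. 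The quadratic identity for $A_ZA_XY$ stated just before $(\ref{AA})$ gives $\sum_j A_{e_j}A_{e_j} = -4\,\Id$ on $TM$, so this last piece contributes the scalar $-2\,\alpha^{(i)}\otimes e_i$.

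The remaining and most delicate term is $\frac12 e_j\cdot e_k\cdot\alpha^{(i)}\otimes(\bar{R}(e_j,e_k)-R(e_j,e_k))e_i$. I would insert the pointwise difference $(\ref{hermitian curvature difference})$, expressing $(\bar{R}-R)(e_j,e_k)e_i$ as a combination of $g(e_i,e_j)e_k$, $-g(e_i,e_k)e_j$, $-3g(e_i,Je_j)Je_k$, $3g(e_i,Je_k)Je_j$, and $2g(e_k,Je_j)Je_i$, and then contract each against $\frac12 e_j\cdot e_k\cdot$. Using the Clifford relation $e_a\cdot e_b\cdot + e_b\cdot e_a\cdot = -2g(e_a,e_b)$, the contractions $\sum_j g(\,\cdot\,,e_j)e_j = (\,\cdot\,)$ and $\sum_j g(\,\cdot\,,Je_j)e_j = -J(\,\cdot\,)$, and the identity $\sum_j e_j\cdot Je_j\cdot = 2\,\omega\cdot$, I expect the first two summands to give $-\frac14 e_j\cdot e_i\cdot\alpha^{(i)}\otimes e_j$ together with a scalar remainder $-\frac14\phi$; the $2g(e_k,Je_j)Je_i$ summand to give precisely $\frac12\omega\cdot\alpha^{(i)}\otimes Je_i$; and the two $J$-summands, after commuting the Clifford factors and re-indexing via $e_j\mapsto Je_j$ to convert $\otimes Je_j$ into $\otimes e_j$, to combine into $-\frac34 Je_i\cdot Je_j\cdot\alpha^{(i)}\otimes e_j$ plus a scalar remainder $-\frac34\phi$.

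Collecting the three scalar contributions $\frac{17}{8}$, $-2$, and $-\frac14-\frac34$ gives the coefficient $-\frac78$, and assembling the surviving tensorial terms reproduces the stated formula. The hardest part is the curvature-difference computation: one must carry all five terms of $(\ref{hermitian curvature difference})$ through repeated Clifford reorderings while cleanly separating the genuinely tensorial output from the scalar remainders, and the re-indexing $e_j\mapsto Je_j$ (which exploits that $\{Je_j\}$ is again orthonormal with $e_j = -J(Je_j)$) is the step where sign errors are most likely. Requiring the accumulated constants to total $-\frac78$ is a convenient internal check that these manipulations have been carried out correctly.
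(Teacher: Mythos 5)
Your proposal is correct and follows essentially the same route as the paper: the Hermitian Lichnerowicz formula $(\ref{Lichnerowicz''})$ combined with Lemma \ref{lem:rough laplacian2}, then the ordinary twisted Lichnerowicz formula to eliminate $\nabla^{\ast}\nabla$, and finally Gray's curvature-difference formula $(\ref{hermitian curvature difference})$ contracted through Clifford reorderings, with all scalar contributions ($\tfrac{17}{8}-2-1=-\tfrac{7}{8}$) and tensorial terms matching the paper's computation exactly. You additionally spell out the conversion $(\Id\otimes A_{e_j})\nabla_{e_j}=(\Id\otimes A_{e_j})\bar{\nabla}_{e_j}+\tfrac14\,(e_j\mathbin{\lrcorner}\psi^-\cdot\otimes A_{e_j})-2\,\Id$ via $\sum_j A_{e_j}A_{e_j}=-4\,\Id$, a step the paper leaves implicit when passing from the operator identity to the local formula.
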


\begin{proof}
Due to \cite[p.107]{Yasushi}, the Lichnerowicz formula for the twisted Dirac operator is 
\[
{D_{TM}}^2 = \nabla^{\ast}\nabla + \frac{15}{2} + \frac{1}{2}e_je_k \otimes R(e_j,e_k).
\]
Employing the equations $(\ref{Lichnerowicz})$ and $(\ref{rough laplacian2})$, we obtain
\begin{equation*}
 \begin{split}
 \overline{D_{TM}}^2 &= \bar{\nabla}^{\ast}\bar{\nabla} + 9 + \omega\cdot\omega\cdot \otimes \Id + (e_j \mathbin{\lrcorner} \psi^- \cdot \otimes \Id)\bar{\nabla}_{e_j} + \frac{1}{2} e_je_k \otimes \bar{R}(e_j,e_k) \\
 &= \nabla^{\ast}\nabla + \frac{77}{8} + \frac{9}{8}\omega\cdot\omega\cdot\otimes\Id + \frac{3}{2}(e_j \mathbin{\lrcorner} \psi^- \otimes \Id)\bar{\nabla}_{e_j} \\
 &\quad + \frac{1}{2} e_je_k \otimes \bar{R}(e_j,e_k) + (\Id\otimes A_{e_j})\nabla_{e_j} \\
 &= {D_{TM}}^2 + \frac{17}{8} + \frac{9}{8}\omega\cdot\omega\otimes\Id + \frac{3}{2}(e_j \mathbin{\lrcorner} \psi^- \otimes \Id)\bar{\nabla}_{e_j} \\
 &\quad + (\Id\otimes A_{e_j})\nabla_{e_j} + \frac{1}{2} e_je_k \otimes (\bar{R}(e_j,e_k)-R(e_j,e_k)).
 \end{split}
\end{equation*}
We calculate the curvature term in the above equation by using $(\ref{hermitian curvature difference})$. For $\alpha^{(i)} \otimes e_i$ in $\Gamma(S_{1/2}\otimes TM)$, we have
\begin{equation*}
 \begin{split}
 &\frac{1}{2} e_je_k \alpha^{(i)} \otimes(\bar{R}(e_j,e_k)e_i-R(e_j,e_k)e_i) \\
 &= \frac{1}{8}e_je_k \alpha^{(i)} \otimes \big{\{} \delta_{ji}e_k - \delta_{ki}e_j - 3g(Je_j,e_i)Je_k + 3g(Je_k,e_i)Je_j + 2g(Je_j,e_k)e_i \big{\}} \\
 &= \frac{1}{8} e_i \cdot e_j \cdot \alpha^{(i)} \otimes e_j - \frac{1}{8} e_j \cdot e_i \cdot \alpha^{(i)} \otimes e_j + \frac{3}{8} Je_i \cdot e_j \cdot \alpha^{(i)} \otimes Je_j \\
 &\quad - \frac{3}{8} e_j \cdot Je_i \cdot \alpha^{(i)} \otimes Je_j + \frac{1}{4} e_j \cdot Je_j \cdot \alpha^{(i)} \otimes Je_i \\
 &= -\frac{1}{4} e_j \cdot e_i \cdot \alpha^{(i)} \otimes e_j - \frac{3}{4} Je_i \cdot Je_j \cdot \alpha^{(i)} \otimes e_j - \alpha^{(i)} \otimes e_i + \frac{1}{2}\omega \cdot \alpha^{(i)} \otimes Je_i.
 \end{split}
\end{equation*}
This gives the theorem.
\end{proof}

\section{Rarita-Schwinger fields on nearly K\"{a}hler manifolds} \label{Rarita Schwinger fields on nearly Kahler}
The goal of this section is to find the conditions under which Rarita-Schwinger fields exist, and this is the main result in this paper.

We recall that the definition of a section $\phi \in \Gamma(S_{1/2} \otimes TM)$ being the Rarita-Schwinger field is $\phi \in \Gamma(S_{3/2})$ and $D_{TM}\phi=0$. These are, of course, equivalent to $\phi \in \Gamma(S_{3/2})$ and ${D_{TM}}^2\phi=0$.

Any elements of $\Gamma(S_{1/2} \otimes TM)$ are represented locally as $\alpha^{(i)} \otimes e_i$ using a local orthonormal frame $\{ e_i \}$. By the isomorphism $(\ref{irrep decomp S_{frac{1}{2}}})$, let $\alpha^{(i)} \in \Gamma(S_{1/2})$ be decomposed into 
\begin{equation} \label{alpha decomposition}
\alpha^{(i)} = \left( {\alpha_0}^{(i)} + {\alpha_1}^{(i)} + {\alpha_6}^{(i)}\vol \right) \cdot \kappa \in \left( \Omega^0M \oplus \Omega^1M \oplus \Omega^6M \right) \cdot \kappa.
\end{equation}
We use this decomposition liberally in the calculations of this section.

First, we rewrite the condition that $\alpha^{(i)} \otimes e_i \in \Gamma(S_{1/2} \otimes TM)$ is an element of $\Gamma(S_{3/2})$.

\begin{lemma} \label{S_{3/2} condition}
Let $\alpha^{(i)} \otimes e_i$ be in $S_{1/2} \otimes TM$. Then, $\alpha^{(i)} \otimes e_i$ is in $S_{3/2}$ if and only if 
\begin{numcases}
{}
\label{S_{3/2} condition omega^0} {\alpha_1}^{(i)} \odot e_i \in \Sym_0 M, & \\ 
\label{S_{3/2} condition omega^6} {\alpha_1}^{(i)} \wedge e_i \in \wedge^{(1,1)}_0 M \oplus \wedge^{(2,0)+(0,2)}M, & \\
\label{S_{3/2} condition omega^1} {\alpha_0}^{(i)} e_i - {\alpha_6}^{(i)}Je_i + A_{e_i}{\alpha_1}^{(i)}=0,
\end{numcases}
where $\Sym_0M$ denotes the trace-free part of $\Sym M$.
\end{lemma}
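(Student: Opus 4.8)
The plan is to unwind the definition of $S_{3/2}$ as $\ker\Pi$ and then read off the three conditions from the three $\SU(3)$-summands of the spinor bundle in $(\ref{irrep decomp S_{frac{1}{2}}})$. Since $\Pi(\zeta\otimes X)=X\cdot\zeta$, membership $\alpha^{(i)}\otimes e_i\in\Gamma(S_{3/2})$ is equivalent to the single spinorial equation $e_i\cdot\alpha^{(i)}=0$. Substituting the decomposition $(\ref{alpha decomposition})$, I would expand
\[
e_i\cdot\alpha^{(i)} = {\alpha_0}^{(i)}\,e_i\cdot\kappa + e_i\cdot{\alpha_1}^{(i)}\cdot\kappa + {\alpha_6}^{(i)}\,e_i\cdot\vol\cdot\kappa ,
\]
and rewrite each summand as an element of $\Omega^0 M\cdot\kappa\oplus\Omega^1 M\cdot\kappa\oplus\Omega^6 M\cdot\kappa$. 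Because $\gamma\mapsto\gamma\cdot\kappa$ is injective, $e_i\cdot\alpha^{(i)}=0$ holds iff its $\Omega^0$, $\Omega^1$, and $\Omega^6$ components each vanish, and these I will match to $(\ref{S_{3/2} condition omega^0})$, $(\ref{S_{3/2} condition omega^1})$, and $(\ref{S_{3/2} condition omega^6})$.

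The computational heart is to sort the three summands into the three types. The first term ${\alpha_0}^{(i)}e_i\cdot\kappa$ is already a $1$-form acting on $\kappa$, so it contributes the vector ${\alpha_0}^{(i)}e_i$ to the $\Omega^1 M\cdot\kappa$ part. For the third term I would apply $(\ref{5-form})$ in the form $X\cdot\vol\cdot\kappa=-JX\cdot\kappa$ to obtain $-{\alpha_6}^{(i)}Je_i\cdot\kappa$, again an $\Omega^1 M\cdot\kappa$ contribution. The middle term is the delicate one: writing the Clifford product of two $1$-forms as its symmetric and skew parts, $e_i\cdot{\alpha_1}^{(i)}=-g(e_i,{\alpha_1}^{(i)})+e_i\wedge{\alpha_1}^{(i)}$, the scalar part produces $-\tr({\alpha_1}^{(i)}\odot e_i)\,\kappa$ in $\Omega^0 M\cdot\kappa$, while the $2$-form $e_i\wedge{\alpha_1}^{(i)}=-{\alpha_1}^{(i)}\wedge e_i$ must be decomposed along $\R\omega\oplus\wedge^{(2,0)+(0,2)}M\oplus\wedge^{(1,1)}_0 M$ and fed into $(\ref{2-form})$. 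By $(\ref{2-form})$ its $\omega$-component yields a multiple of $\vol\cdot\kappa$ in $\Omega^6 M\cdot\kappa$, its $(2,0)+(0,2)$-component yields a $1$-form contribution, and its primitive $(1,1)$-component annihilates $\kappa$ and so imposes no constraint; via $(\ref{2-form action killing2})$ the $(2,0)+(0,2)$-contribution is exactly $A_{e_i}{\alpha_1}^{(i)}\cdot\kappa$.

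Collecting the three components then gives the statement. The $\Omega^0 M\cdot\kappa$ part vanishes iff $\tr({\alpha_1}^{(i)}\odot e_i)=0$; since ${\alpha_1}^{(i)}\odot e_i$ is symmetric by construction, this is precisely $(\ref{S_{3/2} condition omega^0})$. The $\Omega^6 M\cdot\kappa$ part vanishes iff the $\omega$-component of ${\alpha_1}^{(i)}\wedge e_i$ is zero, which is $(\ref{S_{3/2} condition omega^6})$. The $\Omega^1 M\cdot\kappa$ part is the sum ${\alpha_0}^{(i)}e_i-{\alpha_6}^{(i)}Je_i+A_{e_i}{\alpha_1}^{(i)}$ of the three $1$-form contributions, whose vanishing is $(\ref{S_{3/2} condition omega^1})$. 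As a consistency check, the conditions have ranks $1$, $1$, and $6$, summing to the rank $8$ of $S_{1/2}$, i.e. the expected drop in rank from $S_{1/2}\otimes TM$ to $S_{3/2}$.

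The step requiring the most care, and the main obstacle, is the treatment of the middle term: one must split ${\alpha_1}^{(i)}\otimes e_i$ into its symmetric and skew parts and then split the skew part further into its three $\SU(3)$-pieces, tracking which piece lands in which summand of $S_{1/2}$ under $\gamma\mapsto\gamma\cdot\kappa$. The facts that make the bookkeeping close are that the primitive $(1,1)$-part of ${\alpha_1}^{(i)}\wedge e_i$ is annihilated by $\kappa$ (so it imposes no condition and accounts for the freedom inside $S_{3/2}$), and the identification, through $(\ref{2-form action killing2})$, of the $(2,0)+(0,2)$-contribution with $A_{e_i}{\alpha_1}^{(i)}\cdot\kappa$, which is exactly what lets the $\Omega^1$ condition take the clean form $(\ref{S_{3/2} condition omega^1})$.
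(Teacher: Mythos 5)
Your proposal is correct and follows essentially the same route as the paper: reduce membership in $S_{3/2}=\ker\Pi$ to $e_i\cdot\alpha^{(i)}=0$, expand via $(\ref{alpha decomposition})$, $(\ref{5-form})$, and the Clifford action of $2$-forms on $\kappa$, and conclude by injectivity of $\gamma\mapsto\gamma\cdot\kappa$. The only cosmetic difference is that you re-derive the content of $(\ref{2-form action killing2})$ by splitting $e_i\cdot{\alpha_1}^{(i)}$ into scalar and $2$-form parts and invoking $(\ref{2-form})$, whereas the paper applies $(\ref{2-form action killing2})$ directly.
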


\begin{proof}
By definition, for $\alpha^{(i)} \otimes e_i$ in $S_{3/2}$, we know $e_i \cdot \alpha^{(i)}=0$. Using the equations $(\ref{5-form})$ and $(\ref{2-form action killing2})$, we get
\begin{equation*}
 \begin{split}
 e_i \cdot \alpha^{(i)} = -g(e_i,{\alpha_1}^{(i)})\kappa + \omega(e_i,{\alpha_1}^{(i)})\vol \cdot \kappa + \left( {\alpha_0}^{(i)}e_i - {\alpha_6}^{(i)}Je_i + A_{e_i}{\alpha_1}^{(i)} \right) \cdot \kappa.
 \end{split}
\end{equation*}
Using the isomorphism $(\ref{irrep decomp S_{frac{1}{2}}})$ again, we get $g(e_i,{\alpha_1}^{(i)})=0$, $\omega(e_i,{\alpha_1}^{(i)})=0$, and $ {\alpha_0}^{(i)}e_i - {\alpha_6}^{(i)}Je_i + A_{e_i}{\alpha_1}^{(i)}=0$. Here, the equations $g(e_i,{\alpha_1}^{(i)})=0$ and $\omega(e_i,{\alpha_1}^{(i)})=0$ are equivalent to $(\ref{S_{3/2} condition omega^0})$ and $(\ref{S_{3/2} condition omega^6})$, respectively. The converse is obvious from the above discussion.
\end{proof}

We already know the decomposition of the spinor bundle $S_{1/2}$ $(\ref{irrep decomp S_{frac{1}{2}}})$. Also, we have an $\SU(3)$ irreducible decomposition of $S_{1/2} \otimes TM$:
\begin{equation} \label{irrer decomp S_{1/2} otimes TM}
S_{1/2} \otimes TM \cong 2\wedge^1 M \oplus \R\omega \oplus \wedge^{(2,0)+(0,2)}M \oplus \wedge^{(1,1)}_0 M \oplus (\Sym^+_0 M \oplus \R g) \oplus \Sym^- M,
\end{equation}
where $\R g$ is the trace part of $\Sym^+ M$. Keep in mind that the two $\wedge^1 M$ above are derived from $\wedge^0M \otimes \wedge^1M$ and $\wedge^6M \otimes \wedge^1M$, respectively.

Next, we will see what conditions we get when $\alpha^{(i)} \otimes e_i$ is both in $\Gamma(S_{3/2})$ and in the kernel of the twisted Dirac operator. 

\begin{lemma} \label{lemma;alpha_0,alpha_6}
We denote a Rarita-Schwinger field $\phi$ locally as $\alpha^{(i)} \otimes e_i$, then we obtain ${\alpha_0}^{(i)}e_i=0$ and ${\alpha_6}^{(i)}e_i=0$.
\end{lemma}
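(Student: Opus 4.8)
The plan is to recognize that the claim is exactly the statement that the two ``$\wedge^1 M$''-summands of $\phi$ in the decomposition $(\ref{irrer decomp S_{1/2} otimes TM})$ vanish. Writing $v_0 \coloneqq {\alpha_0}^{(i)} e_i$ and $v_6 \coloneqq {\alpha_6}^{(i)} e_i$, these two summands are $\kappa \otimes v_0$ and $\vol \cdot \kappa \otimes v_6$, since by $(\ref{alpha decomposition})$ and the orthogonality of the three blocks in $(\ref{irrep decomp S_{frac{1}{2}}})$ one has $\langle \phi, \kappa \otimes e_j \rangle = {\alpha_0}^{(j)}$ and $\langle \phi, \vol \cdot \kappa \otimes e_j \rangle = {\alpha_6}^{(j)}$. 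So it is enough to show that the $\kappa \otimes TM$- and $\vol \cdot \kappa \otimes TM$-components of $\phi$ are zero, and I would extract these by testing the pointwise identity $D_{TM}\phi = 0$ against $\kappa \otimes e_j$ and $\vol \cdot \kappa \otimes e_j$.

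Concretely, in a frame that is parallel at the point one has $D_{TM}\phi = e_k \cdot \nabla_{e_k}\alpha^{(i)} \otimes e_i$, and differentiating $(\ref{alpha decomposition})$ needs only the Killing equation $\nabla_X \kappa = \frac{1}{2} X \cdot \kappa$ together with $\nabla \vol = 0$. Expanding and applying the Clifford-action formulas $(\ref{5-form})$--$(\ref{2-form action killing2})$, the skew-symmetry of $A_X$ and the relation $A_X J = -J A_X$, and the block-orthogonality of $S_{1/2}$, most terms cancel, and the surviving ones give two first-order relations that express ${\alpha_0}^{(j)}$ and ${\alpha_6}^{(j)}$ through divergence-type derivatives of the ``$TM \otimes TM$''-part $\beta \coloneqq {\alpha_1}^{(i)} \otimes e_i$ (roughly, $3{\alpha_0}^{(j)} = \delta {\alpha_1}^{(j)}$ and a $J$-twisted analogue for $3{\alpha_6}^{(j)}$). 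These are to be read alongside the pointwise $S_{3/2}$-constraint $(\ref{S_{3/2} condition omega^1})$, i.e. $v_0 - J v_6 + A_{e_i}{\alpha_1}^{(i)} = 0$, which ties $v_0$ and $v_6$ to the antisymmetric part of $\beta$.

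To finish, I would turn these relations into a closed system for $v_0$ and $v_6$ alone: the remaining components of $D_{TM}\phi = 0$ (equivalently $P^{\ast}\phi = 0$ and $Q\phi = 0$) should pin down the first derivatives of $\beta$ well enough to eliminate the $\beta$-dependence, after which a global argument closes the estimate. Here the decisive structural input is that $M$ is compact Einstein with $\Ric = 5g > 0$, so that integration by parts combined with the positivity of the zeroth-order curvature terms (the Bochner technique, which in particular forbids nonzero harmonic $1$-forms) yields $\|v_0\|_{L^2}^2 + \|v_6\|_{L^2}^2 \le 0$ and hence $v_0 = v_6 = 0$. I expect this last step to be the main obstacle: the pairing computations are long but mechanical, whereas decoupling $v_0, v_6$ from $\beta$ and getting the \emph{sign} of the curvature contribution right -- so that compactness produces vanishing rather than a mere eigenvalue identity -- is where the real content lies. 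The point most likely to hide sign errors is the Clifford bookkeeping for how $\vol$, $\omega$ and $\psi^-$ act on $\kappa$.
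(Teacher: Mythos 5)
You have the right framing --- the claim is exactly that the two $\wedge^1 M$-summands of $\phi$ in $(\ref{irrer decomp S_{1/2} otimes TM})$ vanish --- and your first-order computation matches the paper's: projecting $D_{TM}\phi=0$, rewritten through the Hermitian connection via $(\ref{twisted first})$, onto $\wedge^0 M\otimes\wedge^1 M$ and $\wedge^6 M\otimes\wedge^1 M$ gives precisely relations of the type $(\ref{D_{TM}=0, wedge^0 otimes wedge^1})$ and $(\ref{D_{TM}=0, wedge^6 otimes wedge^1})$, expressing $v_0$ and $v_6$ through divergences of ${\alpha_1}^{(i)}\otimes e_i$, read together with the constraint $(\ref{S_{3/2} condition omega^1})$. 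But the step you yourself flag as the ``main obstacle'' --- decoupling $v_0$, $v_6$ from $\beta$ --- is genuinely missing, and the route you sketch for it would not work as stated. The remaining components of the first-order system $D_{TM}\phi=0$ (computed later, in the proof of Theorem \ref{thmA}) constrain different derivative combinations of the 2-tensor part ($dw$, $\delta w$, $d\sigma$, $\delta\sigma$, \dots); they cannot be combined pointwise with $(\ref{D_{TM}=0, wedge^0 otimes wedge^1})$ to eliminate $\beta$ and produce a closed system for $v_0$, $v_6$. One must differentiate again.

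That is what the paper actually does: it invokes the second-order equation ${D_{TM}}^2\phi=0$ and the comparison formulas of Section \ref{differential operators} --- $(\ref{Lichnerowicz})$ and $(\ref{twisted second})$, whose proofs rest on $(\ref{rough laplacian2})$ --- to rewrite ${D_{TM}}^2\phi=0$ as the identity $(\ref{D_{TM}^2=0 iff})$ for the Hermitian Laplace operator $\bar{\Delta}$. Projecting that identity onto $\wedge^0 M\otimes\wedge^1 M$ and substituting both the first-order relation $(\ref{D_{TM}=0, wedge^0 otimes wedge^1})$ and the constraint $(\ref{S_{3/2} condition omega^1})$, every ${\alpha_1}$-term and every $v_6$-term cancels, leaving the closed equation $\bar{\Delta}v_0 = -4v_0 + A_{e_j}\bar{\nabla}_{e_j}v_0$; the identity $\bar{\Delta}-\Delta = A_{e_j}\bar{\nabla}_{e_j} - 2\Id$ on $TM$ then converts this to $\Delta v_0 = -2v_0$. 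Note also that the final global step is cheaper than the Bochner argument you envision: no Ricci positivity and no vanishing theorem for harmonic 1-forms is needed, only that the standard ($=$ Hodge) Laplace operator on 1-forms of a compact manifold is non-negative, which makes a $(-2)$-eigenvalue impossible. So the ``sign of the curvature contribution'' is not where the content lies; the content is the cancellation that produces a closed eigenvalue equation, and that cancellation only becomes visible after applying $D_{TM}$ a second time --- which is exactly why the paper develops the Weitzenb\"{o}ck-type comparison machinery of Section \ref{differential operators} before proving this lemma.
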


\begin{proof}
We take a local orthonormal frame $\{ e_i \}$ which is $\bar{\nabla}$-parallel at a point. By the formula $(\ref{twisted first})$, for a Rarita-Schwinger field $\alpha^{(i)} \otimes e_i$, the following holds:
\begin{equation} \label{condition Rarita1}
\overline{D_{TM}}\left( \alpha^{(i)} \otimes e_i \right) + \frac{3}{4} \psi^- \cdot \alpha^{(i)} \otimes e_i + \frac{1}{2} e_j \cdot \alpha^{(i)} \otimes A_{e_j}e_i = 0.
\end{equation}
Using the decomposition $(\ref{alpha decomposition})$ and the formulas $(\ref{5-form})$-$(\ref{2-form action killing2})$, we compute each term in the left-hand side of the equation $(\ref{condition Rarita1})$. For the first term of the equation $(\ref{condition Rarita1})$, we have
\begin{eqnarray*}
\overline{D_{TM}}\left( {\alpha_0}^{(i)} \kappa \otimes e_i \right) &=& (e_j \cdot \otimes \Id)\bar{\nabla}_{e_j}(\kappa \otimes {\alpha_0}^{(i)}e_i), \\
\overline{D_{TM}}\left( {\alpha_6}^{(i)}\vol \cdot \kappa \otimes e_i \right) &=& - (Je_j \cdot \otimes \Id)\bar{\nabla}_{e_j}(\kappa \otimes {\alpha_6}^{(i)}e_i), \\
\overline{D_{TM}}\left( {\alpha_1}^{(i)} \cdot \kappa \otimes e_i \right) &=& -\bar{\nabla}_{e_j}(g(e_j, {\alpha_1}^{(i)})\kappa \otimes e_i) - \bar{\nabla}_{e_j}(g(e_j, J{\alpha_1}^{(i)})\vol \cdot \kappa \otimes e_i) \\
 && + \bar{\nabla}_{e_j}(A_{e_j}{\alpha_1}^{(i)} \cdot \kappa \otimes e_i) + g(\bar{\nabla}_{e_j}e_j, {\alpha_1}^{(i)})\kappa \otimes e_i \\
 && + g(\bar{\nabla}_{e_j}e_j, J{\alpha_1}^{(i)})\vol \cdot \kappa \otimes e_i - A_{\bar{\nabla}_{e_j}e_j}{\alpha_1}^{(i)} \cdot \kappa \otimes e_i.
\end{eqnarray*}
Similarly, for the second term of the equation $(\ref{condition Rarita1})$, we have
\begin{eqnarray*}
\psi^- \cdot {\alpha_0}^{(i)} \kappa \otimes e_i &=& -4 \kappa \otimes {\alpha_0}^{(i)}e_i, \\
\psi^- \cdot {\alpha_6}^{(i)}\vol \cdot \kappa \otimes e_i &=& 4 \vol \cdot \kappa \otimes {\alpha_6}^{(i)}e_i, \\
\psi^- \cdot {\alpha_1}^{(i)} \cdot \kappa \otimes e_i &=& 0.
\end{eqnarray*}
Finally, for the third term of the equation $(\ref{condition Rarita1})$, we have
\begin{eqnarray*}
e_j \cdot {\alpha_0}^{(i)} \kappa \otimes A_{e_j}e_i &=& e_j \cdot \kappa \otimes {\alpha_0}^{(i)}A_{e_j}e_i, \\
e_j \cdot {\alpha_6}^{(i)}\vol \cdot \kappa \otimes A_{e_j}e_i &=& -Je_j \cdot \kappa \otimes {\alpha_6}^{(i)}A_{e_j}e_i, \\
e_j \cdot {\alpha_1}^{(i)} \cdot \kappa \otimes A_{e_j}e_i &=& \kappa \otimes A_{e_i}{\alpha_1}^{(i)} - \vol \cdot \kappa \otimes JA_{e_i}{\alpha_1}^{(i)} \\
&& - e_i \cdot \kappa \otimes {\alpha_1}^{(i)} - Je_i \cdot \kappa \otimes J{\alpha_1}^{(i)}.
\end{eqnarray*}
Summarizing the above calculations, we find that the equation $(\ref{condition Rarita1})$ becomes
\begin{equation} \label{D_{TM}=0}
 \begin{split}
 0&= \kappa \otimes \left( -3{\alpha_0}^{(i)}e_i - \bar{\nabla}_{e_j}(g(e_j, {\alpha_1}^{(i)})e_i) + \frac{1}{2}A_{e_i}{\alpha_1}^{(i)} \right) \\
 &\quad + \vol \cdot \kappa \otimes \left( 3{\alpha_6}^{(i)}e_i - \bar{\nabla}_{e_j}(g(e_j, J{\alpha_1}^{(i)})e_i) - \frac{1}{2}JA_{e_i}{\alpha_1}^{(i)} \right) \\
 &\quad + \bar{\nabla}_{e_j}(e_j \cdot \kappa \otimes {\alpha_0}^{(i)}e_i) + \frac{1}{2} e_j \cdot \kappa \otimes {\alpha_0}^{(i)}A_{e_j}e_i - \bar{\nabla}_{e_j}(Je_j \cdot \kappa \otimes {\alpha_6}^{(i)}e_i) \\
 &\quad - \frac{1}{2} Je_j \cdot \kappa \otimes {\alpha_6}^{(i)}A_{e_j}e_i + (A_{e_j} \otimes \Id)\bar{\nabla}_{e_j}({\alpha_1}^{(i)} \cdot \kappa \otimes e_i) \\
 &\quad - \frac{1}{2} e_i \cdot \kappa \otimes {\alpha_1}^{(i)} - \frac{1}{2}Je_i \cdot \kappa \otimes J{\alpha_1}^{(i)}.
 \end{split}
\end{equation}
Our main idea is to project the equation $(\ref{D_{TM}=0})$ onto each space of the irreducible decomposition $(\ref{irrer decomp S_{1/2} otimes TM})$. First, projecting the equation $(\ref{D_{TM}=0})$ onto the bundle $\wedge^0M \otimes \wedge^1M \cong \wedge^1 M$, we get
\begin{eqnarray}
 0&=&-3{\alpha_0}^{(i)}e_i - \bar{\nabla}_{e_j}(g(e_j, {\alpha_1}^{(i)})e_i) + \frac{1}{2}A_{e_i}{\alpha_1}^{(i)} \nonumber \\
 &\overset{(\ref{S_{3/2} condition omega^1})}{=}& - \frac{7}{2}{\alpha_0}^{(i)}e_i + \frac{1}{2}{\alpha_6}^{(i)}Je_i - \bar{\nabla}_{e_j}(g(e_j,{\alpha_1}^{(i)})e_i). \label{D_{TM}=0, wedge^0 otimes wedge^1}
\end{eqnarray}
In the same way, projecting onto $\wedge^6M \otimes \wedge^1M \cong \wedge^1M$, we get
\begin{equation} \label{D_{TM}=0, wedge^6 otimes wedge^1}
0\overset{(\ref{S_{3/2} condition omega^1})}{=} \frac{7}{2}{\alpha_6}^{(i)}e_i + \frac{1}{2}{\alpha_0}^{(i)}Je_i - \bar{\nabla}_{e_j}(g(e_j,J{\alpha_1}^{(i)})e_i).
\end{equation}
As a reminder, using the symbols that will be defined later in $(\ref{w})$ and $(\ref{H})$, the term $\bar{\nabla}_{e_j}(g(e_j, {\alpha_1}^{(i)})e_i)$ in the equation $(\ref{D_{TM}=0, wedge^0 otimes wedge^1})$ is expressed as 
\[
\bar{\nabla}_{e_j}(g(e_j, {\alpha_1}^{(i)})e_i) = - \frac{1}{2}\delta({\alpha_1}^{(i)} \wedge e_i) - \frac{1}{2}\delta({\alpha_1}^{(i)} \odot e_i).
\]
Analogously, the term $\bar{\nabla}_{e_j}(g(e_j,J{\alpha_1}^{(i)})e_i)$ in the equation $(\ref{D_{TM}=0, wedge^6 otimes wedge^1})$ is expressed as 
\[
\bar{\nabla}_{e_j}(g(e_j,J{\alpha_1}^{(i)})e_i) = \frac{1}{2}J\delta(J{\alpha_1}^{(i)} \wedge Je_i) + \frac{1}{2}J\delta(J{\alpha_1}^{(i)} \odot Je_i).
\]
We leave the other projection in $(\ref{D_{TM}=0})$ to the proof of Theorem \ref{thmA}. 

Now let's consider another equation ${D_{TM}}^2 \left( \alpha^{(i)} \otimes e_i \right)=0$. According to $(\ref{Lichnerowicz})$ and $(\ref{twisted second})$, the equation ${D_{TM}}^2 \left( \alpha^{(i)} \otimes e_i \right)=0$ is equivalent to
\begin{equation} \label{D_{TM}^2=0 iff}
 \begin{split}
 \bar{\Delta} \left( \alpha^{(i)} \otimes e_i \right) &= - \frac{11}{8} \alpha^{(i)} \otimes e_i + \frac{5}{8} \omega \cdot \omega \cdot \alpha^{(i)} \otimes e_i + \frac{1}{2} \omega \cdot \alpha^{(i)} \otimes Je_i \\
 &\quad + \frac{1}{4} e_j \mathbin{\lrcorner} \psi^- \cdot \alpha^{(i)} \otimes A_{e_j}e_i - \frac{3}{4} Je_i \cdot Je_j \cdot \alpha^{(i)} \otimes e_j \\
 &\quad + \frac{1}{2} (e_j \mathbin{\lrcorner} \psi^- \cdot \otimes \Id)\bar{\nabla}_{e_j}(\alpha^{(i)} \otimes e_i) + (\Id \otimes A_{e_j})\bar{\nabla}_{e_j}(\alpha^{(i)} \otimes e_i).
 \end{split}
\end{equation}
Using the decomposition $(\ref{alpha decomposition})$ and the formulas $(\ref{5-form})$-$(\ref{2-form action killing2})$ again, we compute each term in the right-hand side of the equation $(\ref{D_{TM}^2=0 iff})$.
\begin{eqnarray*}
- \frac{11}{8} \alpha^{(i)} \otimes e_i &=& - \frac{11}{8} \kappa \otimes {\alpha_0}^{(i)}e_i - \frac{11}{8} \vol \cdot \kappa \otimes {\alpha_6}^{(i)}e_i \\
&&- \frac{11}{8} {\alpha_1}^{(i)} \cdot \kappa \otimes e_i, \\
\frac{5}{8} \omega \cdot \omega \cdot \alpha^{(i)} \otimes e_i &=& - \frac{45}{8} \kappa \otimes {\alpha_0}^{(i)}e_i - \frac{45}{8} \vol \cdot \kappa \otimes {\alpha_6}^{(i)}e_i \\
&&- \frac{5}{8} {\alpha_1}^{(i)} \cdot \kappa \otimes e_i, \\
\frac{1}{2} \omega \cdot \alpha^{(i)} \otimes Je_i &=& - \frac{3}{2} \kappa \otimes {\alpha_6}^{(i)}Je_i + \frac{3}{2} \vol \cdot \kappa \otimes {\alpha_0}^{(i)}Je_i\\
&& - \frac{1}{2} J{\alpha_1}^{(i)} \cdot \kappa \otimes Je_i, \\
\frac{1}{4} e_j \mathbin{\lrcorner} \psi^- \cdot \alpha^{(i)} \otimes A_{e_j}e_i &=& \frac{1}{2} \kappa \otimes A_{e_i}{\alpha_1}^{(i)} + \frac{1}{2} \vol \cdot \kappa \otimes JA_{e_i}{\alpha_1}^{(i)} \\
 && + \frac{1}{2} {\alpha_0}^{(i)} e_j \cdot \kappa \otimes A_{e_j}e_i + \frac{1}{2} {\alpha_6}^{(i)} Je_j \cdot \kappa \otimes A_{e_j}e_i, \\
- \frac{3}{4} Je_i \cdot Je_j \cdot \alpha^{(i)} \otimes e_j &=& \frac{3}{4} \kappa \otimes ({\alpha_0}^{(i)}e_i + {\alpha_6}^{(i)}Je_i + A_{e_i}{\alpha_1}^{(i)}) \\
 && + \frac{3}{4} \vol \cdot \kappa \otimes (-{\alpha_0}^{(i)}Je_i + {\alpha_6}^{(i)}e_i + JA_{e_i}{\alpha_1}^{(i)}) \\
 && + \{ \wedge^1 M \otimes \wedge^1 M {\rm \; part} \}, \\
 \frac{1}{2} (e_j \mathbin{\lrcorner} \psi^- \cdot \otimes \Id)\bar{\nabla}_{e_j}(\alpha^{(i)} \otimes e_i) &=& - \kappa \otimes \bar{\nabla}_{e_j}(g(e_j,{\alpha_1}^{(i)})e_i) \\
 && + g(\bar{\nabla}_{e_j}e_j, {\alpha_1}^{(i)}) \kappa \otimes e_i \\
 && - \vol \cdot \kappa \otimes \bar{\nabla}_{e_j}(g(Je_j,{\alpha_1}^{(i)})e_i) \\
 && + g(J\bar{\nabla}_{e_j}e_j, {\alpha_1}^{(i)}) \vol \cdot \kappa \otimes e_j \\
 && + \{ \wedge^1 M \otimes \wedge^1 M {\rm \; part} \}, \\
(\Id \otimes A_{e_j})\bar{\nabla}_{e_j}(\alpha^{(i)} \otimes e_i) &=& \kappa \otimes A_{e_j}\bar{\nabla}_{e_j}({\alpha_0}^{(i)}e_i) + \vol \cdot \kappa \otimes A_{e_j}\bar{\nabla}_{e_j}({\alpha_6}^{(i)}e_i) \\
 && + (\Id \otimes A_{e_j})\bar{\nabla}_{e_j}({\alpha_1}^{(i)} \cdot \kappa \otimes e_i).
\end{eqnarray*}
We project the equation $(\ref{D_{TM}^2=0 iff})$ onto $\wedge^0 M \otimes \wedge^1M$ and $\wedge^6 M \otimes \wedge^1 M$ again. Considering the projection onto $\wedge^0M \otimes \wedge^1M \cong \wedge^1M$, we get 
\begin{eqnarray*}
\bar{\Delta} \left( {\alpha_0}^{(i)}e_i \right) &=& - \frac{25}{4}{\alpha_0}^{(i)}e_i - \frac{3}{4}{\alpha_6}^{(i)}Je_i + \frac{5}{4}A_{e_i}{\alpha_1}^{(i)} \\
&& - \bar{\nabla}_{e_j} \left( g(e_j, {\alpha_1}^{(i)})e_i \right) + A_{e_j}\bar{\nabla}_{e_j}\left( {\alpha_0}^{(i)}e_i \right) \\
&\overset{(\ref{D_{TM}=0, wedge^0 otimes wedge^1})}{=}& -4{\alpha_0}^{(i)}e_i + A_{e_j}\bar{\nabla}_{e_j} \left( {\alpha_0}^{(i)}e_i \right).
\end{eqnarray*}
From the fact that $\bar{\Delta}-\Delta = A_{e_j}\bar{\nabla}_{e_j} - 2\Id$ over the tangent bundle $TM$, the above equation becomes
\begin{equation*}
\Delta \left( {\alpha_0}^{(i)}e_i \right) = -2{\alpha_0}^{(i)}e_i.
\end{equation*}
Since the Laplace operator on a tangent bundle is a non-negative operator, the equation ${\alpha_0}^{(i)}e_i=0$ follows from this equation. Similarly for $\wedge^6M \otimes \wedge^1M \cong \wedge^1M$, we know ${\alpha_6}^{(i)}e_i=0$.
\end{proof}

Applying the results of Lemma \ref{lemma;alpha_0,alpha_6}, ${\alpha_0}^{(i)}e_i=0$ and ${\alpha_6}^{(i)}e_i=0$, to the equation $(\ref{D_{TM}=0})$, we have
\begin{equation} \label{simply D_{TM}=0}
 \begin{split}
 0&= - \kappa \otimes \bar{\nabla}_{e_j}(g(e_j, {\alpha_1}^{(i)})e_i) - \vol \cdot \kappa \otimes \bar{\nabla}_{e_j}(g(e_j, J{\alpha_1}^{(i)})e_i) \\
 &\quad - \frac{1}{2} e_i \cdot \kappa \otimes {\alpha_1}^{(i)} - \frac{1}{2} Je_i \cdot \kappa \otimes J{\alpha_1}^{(i)} + (A_{e_j} \otimes \Id)\bar{\nabla}_{e_j}({\alpha_1}^{(i)} \cdot \kappa \otimes e_i).
 \end{split}
\end{equation}

As already mentioned in the proof of Lemma \ref{lemma;alpha_0,alpha_6}, we want to project the equation $(\ref{condition Rarita1})$ (briefly the equation $(\ref{simply D_{TM}=0})$) onto bundles other than $\wedge^1 M$ of the decomposition $(\ref{irrer decomp S_{1/2} otimes TM})$. For this purpose, we introduce several symbols:
\begin{eqnarray} 
w &\coloneqq& \pr_{\wedge^2 M} \left( {\alpha_1}^{(i)} \otimes e_i \right) = \frac{1}{2} {\alpha_1}^{(i)} \wedge e_i = \frac{1}{2} \left( {\alpha_1}^{(i)} \otimes e_i - e_i \otimes {\alpha_1}^{(i)} \right), \label{w} \\
H &\coloneqq& \pr_{\Sym M} \left( {\alpha_1}^{(i)} \otimes e_i \right) = \frac{1}{2} {\alpha_1}^{(i)} \odot e_i = \frac{1}{2} \left( {\alpha_1}^{(i)} \otimes e_i + e_i \otimes {\alpha_1}^{(i)} \right), \label{H} \\
h &\coloneqq& \pr_{\Sym^+ M} H = \frac{1}{4} \left( {\alpha_1}^{(i)} \odot e_i + J{\alpha_1}^{(i)} \odot Je_i \right), \label{h} \\
S &\coloneqq& \pr_{\Sym^- M} H = \frac{1}{4} \left( {\alpha_1}^{(i)} \odot e_i - J{\alpha_1}^{(i)} \odot Je_i \right), \label{S} \\
\varphi(\cdot, \; \cdot) &\coloneqq& g(Jh \cdot, \; \cdot) \in \Omega^{(1,1)} M, \label{varphi} \\
\sigma &\coloneqq& S_{\star}\psi^+ \in\Omega^{(2,1)+(1,2)}_0 M, \label{sigma}
\end{eqnarray}
where $\wedge^2 M$ denotes $\R\omega \oplus \wedge^{(2,0)+(0,2)}M \oplus \wedge^{(1,1)}_0M$.

\let\temp\thetheorem
\renewcommand{\thetheorem}{\ref{thmA}}

\begin{theorem}
Let $(M,g,J)$ be a 6-dimensional compact strict nearly K\"{a}hler manifold, then the space of the Rarita-Schwinger fields is isomorphic to the space of the harmonic 3-forms. In particular, we get
\begin{equation*}
\dim \ker Q = b_3(M),
\end{equation*}
where $b_3(M)$ denotes the 3rd Betti number.
\end{theorem}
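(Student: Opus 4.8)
The plan is to finish the analysis begun in Lemmas \ref{S_{3/2} condition} and \ref{lemma;alpha_0,alpha_6} by projecting the first-order equation $(\ref{simply D_{TM}=0})$ onto the summands of $(\ref{irrer decomp S_{1/2} otimes TM})$ not yet treated, and then to recognise the resulting first-order system as the harmonicity of an associated $3$-form. First I would reduce the unknown to honest tensor data. By Lemmas \ref{S_{3/2} condition} and \ref{lemma;alpha_0,alpha_6} a Rarita--Schwinger field $\phi=\alpha^{(i)}\otimes e_i$ has ${\alpha_0}^{(i)}e_i={\alpha_6}^{(i)}e_i=0$, so it is encoded entirely by the $2$-tensor ${\alpha_1}^{(i)}\otimes e_i$, i.e.\ by the pair $(w,H)=(w,h+S)$ of $(\ref{w})$--$(\ref{S})$. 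Conditions $(\ref{S_{3/2} condition omega^0})$ and $(\ref{S_{3/2} condition omega^6})$ delete the $\R g$ and $\R\omega$ summands, so $h\in\Sym^+_0M$, $S\in\Sym^-M$ and $w\in\Omega^{(1,1)}_0M\oplus\Omega^{(2,0)+(0,2)}M$. Moreover, once ${\alpha_0}^{(i)}e_i={\alpha_6}^{(i)}e_i=0$, the last condition $(\ref{S_{3/2} condition omega^1})$ collapses to $A_{e_i}{\alpha_1}^{(i)}=0$; since $g(A_{e_i}{\alpha_1}^{(i)},Z)=-\psi^+(e_i,{\alpha_1}^{(i)},JZ)$ sees only the skew part $w$, this vanishes iff the contraction $w\lrcorner\psi^+$ does, which by the identification $X\mapsto X\lrcorner\psi^+$ of $\Omega^{(2,0)+(0,2)}M$ with $TM$ kills the $\Omega^{(2,0)+(0,2)}M$-component of $w$. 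Hence a Rarita--Schwinger field is, algebraically, the same as a triple $(w,\varphi,\sigma)$ with $w,\varphi\in\Omega^{(1,1)}_0M$ and $\sigma=S_\star\psi^+\in\Omega^{(2,1)+(1,2)}_0M$, where $\varphi=g(Jh\cdot,\cdot)$.

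Next I would project $(\ref{simply D_{TM}=0})$ onto the four surviving bundles $\wedge^{(2,0)+(0,2)}M$, $\wedge^{(1,1)}_0M$, $\Sym^+_0M$ and $\Sym^-M$. Every derivative term that appears is of the form $A_{e_i\star}\bar\nabla_{e_i}$ or $\widetilde{A_{e_i\star}}\bar\nabla_{e_i}$ applied to $w$, $h$ or $S$, so Propositions \ref{lem:herm} and \ref{prop; action 2-tensor} turn the four projections into a closed first-order system expressed purely through $dw,\ \delta w,\ d\varphi,\ \delta\varphi,\ \ast d\sigma$ and $\delta S$.

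The conceptual step is then to read this system off a single $3$-form. Using the Killing spinor one has the natural map $\Omega\mapsto (e_i\lrcorner\Omega)\cdot\kappa\otimes e_i$ from $\wedge^3M$ into $S_{1/2}\otimes TM$; evaluating it with $(\ref{5-form})$--$(\ref{2-form action killing2})$ identifies the $\Omega^{(2,1)+(1,2)}_0M$-part of a $3$-form with $\sigma$ and expresses its $\wedge^1M\wedge\omega$ and $\psi^\pm$ parts through $w$ and $\varphi$. Matching components, I expect the four projected equations to be equivalent precisely to $d\Omega=0$ together with $\delta\Omega=0$, so that $\phi\mapsto\Omega$ is a linear isomorphism of the space of Rarita--Schwinger fields onto the space of harmonic $3$-forms.

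For the converse I would start from a harmonic $3$-form, split it into its $\SU(3)$-components to define $(w,\varphi,\sigma)$ and hence $\phi\in\Gamma(S_{3/2})$, and verify that $d\Omega=\delta\Omega=0$ forces $(\ref{simply D_{TM}=0})$, i.e.\ $D_{TM}\phi=0$. Since $(M,g)$ is compact Einstein with $\scal=30>0$, \cite[Proposition 4.1]{YasushiSemmelmann} gives $\ker Q\cap\ker P^\ast=\ker Q$, so these $\phi$ are exactly the elements of $\ker Q$; Hodge theory then identifies harmonic $3$-forms with $H^3(M;\R)$ and yields $\dim\ker Q=b_3(M)$. The \emph{main obstacle} will be the matching above: pinning down the exact $3$-form $\Omega$ with the correct constants so that the four tensorial projections assemble into $d\Omega=0$ and $\delta\Omega=0$ with nothing left over, and checking that the algebraic constraint $A_{e_i}{\alpha_1}^{(i)}=0$ is automatically encoded by the type of $\Omega$ rather than being an extra equation. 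The projections and the use of Propositions \ref{lem:herm} and \ref{prop; action 2-tensor} are essentially bookkeeping; the representation-theoretic identification of the reduced system with harmonicity is where the real content lies.
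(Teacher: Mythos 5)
Your reduction of a Rarita--Schwinger field to the triple $(w,\varphi,\sigma)$ and your plan to project $(\ref{simply D_{TM}=0})$ onto the remaining summands of $(\ref{irrer decomp S_{1/2} otimes TM})$ via Propositions \ref{lem:herm} and \ref{prop; action 2-tensor} match the paper exactly. But your ``conceptual step'' --- assembling $(w,\varphi,\sigma)$ linearly into a single $3$-form $\Omega$ so that the projected system becomes $d\Omega=0$, $\delta\Omega=0$ --- cannot work, and this is precisely where the real content of the proof lies. Representation-theoretically, $w$ and $\varphi$ are sections of $\wedge^{(1,1)}_0M$, the $8$-dimensional adjoint representation of $\SU(3)$, while the complement of $\wedge^{(2,1)+(1,2)}_0M$ inside $\wedge^3M$ is $\wedge^1M\wedge\omega\oplus\R\psi^+\oplus\R\psi^-$, i.e.\ the standard $6$-dimensional representation plus two trivial summands; by Schur's lemma every $\SU(3)$-equivariant linear map from $\wedge^{(1,1)}_0M$ into these summands vanishes, so no $3$-form $\Omega$ can ``express its $\wedge^1M\wedge\omega$ and $\psi^\pm$ parts through $w$ and $\varphi$''. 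Moreover the system one actually obtains from the projections is
\[
\ast d\sigma=-2w,\qquad \delta\sigma=-2\varphi,\qquad \ast dw=d\varphi,\qquad \delta w=0,\qquad \delta\varphi=0,
\]
which genuinely couples $\sigma$ to $w$ and $\varphi$ and is not, pointwise or locally, the harmonicity of any $3$-form.

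The missing ingredient is a global argument, not an algebraic repackaging. From the system above one computes $d\delta\sigma=-2d\varphi$ and $\delta d\sigma=-\ast d\ast d\sigma=2\ast dw=2d\varphi$, hence $\Delta\sigma=0$; then, \emph{using compactness}, $0=\langle\Delta\sigma,\sigma\rangle_{L^2}=\|d\sigma\|^2+\|\delta\sigma\|^2$ forces $d\sigma=\delta\sigma=0$ and therefore $w=0$ and $\varphi=0$. So a Rarita--Schwinger field corresponds to the single harmonic form $\sigma\in\Omega^{(2,1)+(1,2)}_0M$, with the other two components forced to vanish --- the opposite of your picture, in which they survive inside $\Omega$. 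Two further points: your converse, ``split a harmonic $3$-form into $\SU(3)$-components to recover $(w,\varphi,\sigma)$'', does not type-check, since the components of a $3$-form are a $1$-form, two functions and $\sigma$, never primitive $(1,1)$-forms; and to pass from harmonic primitive $(2,1)+(1,2)$-forms to \emph{all} harmonic $3$-forms (hence to $\dim\ker Q=b_3(M)$) you need Verbitsky's theorem \cite{Verbitsky} that on a compact strict nearly K\"ahler $6$-manifold every harmonic $3$-form is automatically primitive of type $(2,1)+(1,2)$. Without the compactness argument and this theorem, your scheme does not close.
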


\let\thetheorem\temp
\addtocounter{theorem}{-1}

\begin{proof}
Let $\alpha^{(i)} \otimes e_i$ be a local expression of a Rarita-Schwinger field. Employing Lemma \ref{lemma;alpha_0,alpha_6} and the equation $(\ref{S_{3/2} condition omega^1})$, we get $A_{e_i}{\alpha_1}^{(i)}=0$. This implies ${\alpha_1}^{(i)} \wedge e_i \in C^{\infty}(M)\omega \oplus \Omega^{(1,1)}_0 M$. Combining this with $(\ref{S_{3/2} condition omega^6})$, we obtain $w \in \Omega^{(1,1)}_0 M$. Furthermore, we know $h \in \Gamma(\Sym^+_0 M)$, and of course $\varphi \in \Omega^{(1,1)}_0 M$.

Next, the equation $(\ref{D_{TM}=0, wedge^0 otimes wedge^1})$ becomes $\bar{\nabla}_{e_j}(g(e_j, {\alpha_1}^{(i)})e_i)=0$ from Lemma \ref{lemma;alpha_0,alpha_6}, and this is equal to $\delta w + \delta h + \delta S=0$. In the same way, $\delta w + \delta h - \delta S = 0$ follows immediately from the equation $(\ref{D_{TM}=0, wedge^6 otimes wedge^1})$. Putting these two equations together, we obtain
\begin{equation} \label{deltaw,h,S}
\delta w + \delta h=0, \; \delta S=0.
\end{equation}

Now let us project the equation $(\ref{simply D_{TM}=0})$ onto the bundle $\wedge^1 M \otimes \wedge^1 M$. Then we get
\[
(A_{e_j} \otimes \Id)\bar{\nabla}_{e_j}({\alpha_1}^{(i)} \otimes e_i) = \frac{1}{2} e_i \otimes {\alpha_1}^{(i)} + \frac{1}{2} Je_i \otimes J{\alpha_1}^{(i)} = -w + h.
\]
Furthermore, using the two actions $A_{e_j \star}$ and $\widetilde{A_{e_j \star}}$ on 2-tensors introduced in Subsection \ref{action 2-tensors}, we find that the above equation becomes
\begin{equation} \label{D_{TM}=0, wedge^1 otimes wedge^1'}
\frac{1}{2} A_{e_j \star}\bar{\nabla}_{e_j}({\alpha_1}^{(i)} \otimes e_i) + \frac{1}{2} \widetilde{A_{e_j \star}}\bar{\nabla}_{e_j}({\alpha_1}^{(i)} \otimes e_i) = -w + h. 
\end{equation}

Note that ${\alpha_1}^{(i)} \otimes e_i$ is denoted as $w+h+S$. Applying the results of Lemma \ref{lem:herm} and Proposition \ref{prop; action 2-tensor} to the equation $(\ref{D_{TM}=0, wedge^1 otimes wedge^1'})$, we get
\begin{equation} \label{important eq}
 \begin{split}
 -2w + 2h &= \delta w \mathbin{\lrcorner} \psi^- + (J\delta S \mathbin{\lrcorner} \psi^- + \delta \sigma) \circ J + A_{e_j \star} \bar{\nabla}_{e_j}h \\
 &\quad - \delta h \mathbin{\lrcorner} \psi^- + \ast d\sigma - \delta S \mathbin{\lrcorner} \psi^- + \widetilde{A_{e_j \star}}\bar{\nabla}_{e_j}w.
 \end{split}
\end{equation}
Therefore, the equations $(\ref{deltaw,h,S})$ and $(\ref{important eq})$ are equivalent to the system
\begin{equation} \label{a}
\begin{cases}
\ast d\sigma = -2w, & \\
\delta \sigma \circ J = 2h, & \\
A_{e_j \star}\bar{\nabla}_{e_j \star}h + \widetilde{A_{e_j \star}}\bar{\nabla}_{e_j}w=0, & \\
(\delta w - \delta h - \delta S) \mathbin{\lrcorner} \psi^- + (J\delta S \mathbin{\lrcorner} \psi^-) \circ J =0, & \\
\delta w + \delta h=0, \; \delta S=0.
\end{cases}
\end{equation}
By combining with $J$, the second equation of $(\ref{a})$ becomes
\begin{equation} \label{a second eq}
- \delta \sigma = 2\varphi.
\end{equation}
Let the third equation of $(\ref{a})$ act on $\psi^+$. Employing Lemma \ref{lem:herm} and Proposition \ref{prop; action 2-tensor} again, we find
\begin{equation}
2(\delta h + \delta w)\wedge \omega -4d\varphi + 4\ast dw=0.
\end{equation}
For the fourth equation of $(\ref{a})$, taking note of $(J\delta S \mathbin{\lrcorner} \psi^-) \circ J = - \delta S \mathbin{\lrcorner} \psi^-$, we get
\begin{equation}
\delta w - \delta h - 2\delta S = 0.
\end{equation}
Putting this equation together with the fifth equation of $(\ref{a})$, we get
\begin{equation} \label{a fifth eq}
\delta w=0, \; \delta h=0, \; {\rm and} \; \delta S=0.
\end{equation}
Based on these calculations $(\ref{a second eq})$-$(\ref{a fifth eq})$, the system $(\ref{a})$ is equivalent to
\begin{equation} \label{b}
\begin{cases}
\ast d\sigma = -2w, & \\
\delta \sigma = -2\varphi, & \\
\ast dw = d\varphi, & \\
\delta w = 0, \; \delta \varphi=0.
\end{cases}
\end{equation}
Here, we use the relation $\delta h = -J\delta\varphi$. It is a little difficult to derive the equation $\delta S = 0$ when we show $(\ref{a})$ from $(\ref{b})$. We solve this problem by checking in which component of the decomposition $(\ref{irrer decomp S_{1/2} otimes TM})$ each term of the formula $(\ref{second induced action Sym^-})$ is contained. Finally, combining the equations in this system $(\ref{b})$, we obtain the system equivalent to $(\ref{b})$:
\begin{equation}
\Delta \sigma = 0, \; \varphi=0, \; w=0.
\end{equation}
Here, we note that $M$ is compact.

Conversely, if there exists a harmonic primitive $(2,1)+(1,2)$-form $\sigma$, then we have a Rarita-Schwinger field $\alpha^{(i)}\otimes e_i$ immediately.  By Verbitsky's theorem (cf.\cite{Verbitsky}), arbitrary harmonic 3-form is primitive and of type $(2,1)+(1,2)$, so Theorem \ref{thmA} is proved.
\end{proof}

\section{Examples and Applications} \label{Examples and Applications}
\subsection{Examples}
As shown in Theorem \ref{thmA}, we revealed that the space of the Rarita-Schwinger fields isomorphic to the space of the harmonic 3-form on 6-dimensional compact strict nearly K\"{a}hler manifolds. It is an important problem whether there actually exist manifolds with non-zero Rarita-Schwinger fields. We find out how many Rarita-Schwinger fields exist for a specific manifold.

There are only six known examples of complete simply connected strict nearly K\"{a}hler 6-manifolds. Four of them are homogeneous: $S^6 = \G_2 / \SU(3)$, $S^3 \times S^3 = (\SU(2) \times \SU(2) \times \SU(2)) / \Delta \SU(2)$, $\C\P^3 = \Sp(2) / (\U(1) \times \Sp(1))$, $\F(1,2) = \SU(3) / T^2$. They were first constructed by J. A. Wolf and A. Gray \cite{WolfGray}. Moreover, Butruille \cite{Butruille} showed that there is no other homogeneous example in dimension 6. The other two are inhomogeneous examples, $S^6$ and $S^3 \times S^3$, found by Foscolo and Haskins \cite{FoscoloHaskins}.

We actually calculate the 3rd betti numbers for these six nearly K\"{a}hler manifolds, we get $b_3(S^3 \times S^3)=2$ (both for homogeneous and inhomogeneous) and the other cases vanish. Therefore we know that Rarita-Schwinger fields exist in two $S^3 \times S^3$ and not in the other.

We remark that the manifold $S^3 \times S^3$ with the standard metric, which is not a nearly K\"{a}hler manifold, does not have Rarita-Schwinger fields. This is easily indicated by the following.

Let $\{e_1, e_2, e_3 \}$ be a local orthonormal tangent frame of the first factor of the Riemannian product $S^3 \times S^3$, and $\{e_4, e_5, e_6 \}$ be a local orthonormal tangent frame of the second factor. Any sections of $S_{1/2} \otimes T(S^3 \times S^3)$ are written locally as $\sum_i \alpha^{(i)} \otimes e_i$. Using the Lichnerowicz formula for the twisted Dirac operator, we get
\begin{equation*}
 \begin{split}
 \sum_i {D_{TM}}^2 (\alpha^{(i)} \otimes e_i) &= \sum_i \nabla^{\ast}\nabla(\alpha^{(i)} \otimes e_i) + \frac{\scal}{4} \sum_i \alpha^{(i)} \otimes e_i \\
 &\quad + \frac{1}{2} \sum_{i,j,k} e_j e_k \alpha^{(i)} \otimes R(e_j, e_k)e_i \\
 &= \sum_i \nabla^{\ast}\nabla(\alpha^{(i)} \otimes e_i) + 4 \sum_i \alpha^{(i)} \otimes e_i \\
 &\quad + \sum_{1 \leq i,j \leq 3} e_j e_i \alpha^{(i)} \otimes e_j + \sum_{4 \leq i,j \leq 6} e_j e_i \alpha^{(i)} \otimes e_j.
 \end{split}
\end{equation*}
By taking the $L^2$-inner product with $\sum_i \alpha^{(i)} \otimes e_i$, the above equation becomes
\begin{equation} \label{S^3 times S^3}
 \begin{split}
 \left| {D_{TM}} \left(\sum_i \alpha^{(i)} \otimes e_i \right) \right|^2 &= \left| \nabla \left(\sum_i \alpha^{(i)} \otimes e_i \right) \right|^2 + 4\left| \sum_i \alpha^{(i)} \otimes e_i \right|^2 \\
 &\quad - \left| \sum_{1 \leq i \leq 3} e_i \cdot \alpha^{(i)} \right|^2 - \left| \sum_{4 \leq i \leq 6} e_i \cdot \alpha^{(i)} \right|^2 .
 \end{split}
\end{equation}
Evaluating the third term of the equation $(\ref{S^3 times S^3})$, we have
\begin{equation*}
 \begin{split}
 \left| \sum_{1 \leq i \leq 3} e_i \cdot \alpha^{(i)} \right|^2 &\leq \left( \sum_{1 \leq i \leq 3} \left| e_i \cdot \alpha^{(i)} \right| \right)^2 = \left( \sum_{1 \leq i \leq 3} \left| \alpha^{(i)} \right| \right)^2 \\
 &= \sum_{1 \leq i \leq 3} \left| \alpha^{(i)} \right|^2 + 2 \left| \alpha^{(1)} \right| \left| \alpha^{(2)} \right| + 2 \left| \alpha^{(2)} \right| \left| \alpha^{(3)} \right| + 2 \left| \alpha^{(3)} \right| \left| \alpha^{(1)} \right| \\
 &\leq 3 \sum_{1 \leq i \leq 3} \left| \alpha^{(i)} \right|^2.
 \end{split}
\end{equation*}
We evaluate the fourth term in the same way. Putting these calculations together, we get the inequality
\begin{equation*}
\left| {D_{TM}} \left(\sum_i \alpha^{(i)} \otimes e_i \right) \right|^2 \geq \left| \nabla \left(\sum_i \alpha^{(i)} \otimes e_i \right) \right|^2 + \left| \sum_i \alpha^{(i)} \otimes e_i \right|^2.
\end{equation*}
From this equation, we deduce $\ker D_{TM} = \{ 0 \}$. In other words, we know that there is no Rarita-Schwinger field on $S^3 \times S^3$ with the standard metric.

This is the first example in which the existence of non-trivial Rarita-Schwinger fields depends not only on topological conditions but also on metrics. In both of the nearly K\"{a}hler and the standard $S^3 \times S^3$, there is no harmonic spinor field. However, only in the nearly K\"{a}hler $S^3 \times S^3$, there are Rarita-Schwinger fields.

\subsection{Linear stability}
The following theorem about the linear stability of nearly K\"{a}hler manifolds is known by Semmelmann, Wang, and Wang \cite{SemmelmannWang}.

\begin{theorem}[\cite{SemmelmannWang}]
Let $(M, g, J)$ be a complete strict nearly K\"{a}hler 6-manifold. If $b_2(M)$ or $b_3(M)$
is nonzero, then $g$ is linearly unstable with respect to the Einstein-Hilbert action restricted to
the space of Riemannian metrics with constant scalar curvature and fixed volume.
\end{theorem}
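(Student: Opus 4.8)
The plan is to combine the standard second-variation criterion for Einstein metrics with the Hermitian-connection machinery of Sections~\ref{curvatureLaplacian} and~\ref{differential operators}. Recall that on a compact Einstein $6$-manifold with $\Ric=5g$ the bundle of symmetric $2$-tensors decomposes, in the sense of Berger and Ebin, into pure-trace tensors, Lie-derivative (gauge) directions, and transverse--traceless (TT) tensors, and that the second variation of the Einstein--Hilbert action on the slice of constant-scalar-curvature metrics of fixed volume reduces, up to a positive factor, to the quadratic form $h\mapsto\langle(\nabla^{\ast}\nabla-2\mathring{R})h,h\rangle_{L^2}$ on TT tensors, where $\mathring{R}$ denotes the curvature action $(\mathring{R}h)(X,Y)=h(R(e_i,X)Y,e_i)$. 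On symmetric $2$-tensors the standard Laplacian $\Delta=\nabla^{\ast}\nabla+q(R)$ of Section~\ref{curvatureLaplacian} equals the Lichnerowicz Laplacian, so this form is $\langle(\Delta-10)h,h\rangle_{L^2}$; hence $g$ is linearly unstable as soon as there is a nonzero TT tensor $h$ with $\Delta h=\lambda h$ and $\lambda<10=2\cdot5$.

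First I would produce such tensors from the hypotheses. By Verbitsky's theorem (already invoked in the proof of Theorem~\ref{thmA}), a harmonic $2$-form is necessarily a co-closed primitive $(1,1)$-form $\varphi$ and a harmonic $3$-form is a primitive $(2,1)+(1,2)$-form $\sigma$, which are exactly the objects carried by the $\SU(3)$-decompositions of Section~\ref{Algebraic results}. When $b_2\neq0$ I would attach to $\varphi$ the section $h\in\Gamma(\Sym^{+}_{0}M)$ determined by $\varphi=g(Jh\,\cdot,\cdot)$, and when $b_3\neq0$ the section $S\in\Gamma(\Sym^{-}M)$ with $\sigma=S_{\star}\psi^{+}$. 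Both are trace-free by primitivity, so only the divergence-free condition must be verified. This is precisely where harmonicity is used: feeding $d\varphi=0$ and $\delta\varphi=0$ (respectively $d\sigma=0$ and $\delta\sigma=0$) into the identities of Proposition~\ref{lem:herm} and Proposition~\ref{prop; action 2-tensor}, which relate $\delta h$, $\delta S$, $\delta\varphi$ and $\delta\sigma$, forces $\delta h=0$ (respectively $\delta S=0$). Thus $h$ and $S$ are genuine TT tensors.

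The core of the proof is to evaluate $\Delta$ on these tensors and to show the resulting eigenvalue is below $10$. My approach is to transfer the computation to the canonical Hermitian connection exactly as in Section~\ref{differential operators}. Because the isomorphisms $\varphi\leftrightarrow h$ and $\sigma\leftrightarrow S$ are $\bar{\nabla}$-parallel, the Hermitian rough Laplacian $\bar{\nabla}^{\ast}\bar{\nabla}$ intertwines the form and tensor pictures, while $q(\bar{R})$ acts, by Schur's Lemma, as a single scalar on each of the $\SU(3)$-irreducible bundles $\wedge^{(1,1)}_0 M\cong\Sym^{+}_{0}M$ and $\wedge^{(2,1)+(1,2)}_0 M\cong\Sym^{-}M$. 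Writing $\Delta-\bar{\Delta}$ as the sum of a first-order operator built from $A_{e_i\star}\bar{\nabla}_{e_i}$ and a zeroth-order curvature term governed by~(\ref{hermitian curvature difference}), and doing likewise for the form Laplacian, the harmonicity $\Delta\varphi=0$ (respectively $\Delta\sigma=0$) cancels the rough-Laplacian contribution and the first-order derivatives collapse by~(\ref{herm wedge^{(1,1)}_0})--(\ref{herm Sym^-}) and~(\ref{second induced action wedge^{(1,1)}_0})--(\ref{second induced action Sym^-}). What remains is a purely algebraic identity $\Delta h=\lambda h$ (respectively $\Delta S=\lambda S$) with an explicit curvature constant $\lambda$.

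The main obstacle, and the only genuinely computational step, is the bookkeeping of these curvature and $A$-contributions needed to pin down $\lambda$ and to check the strict inequality $\lambda<10$; this is the stability analogue of the eigenvalue computation that produces the threshold $12$ in Theorem~\ref{thmB}, carried out on $\Sym^{\pm}M$ rather than on forms. Granting $\lambda<10$ in both cases, the quadratic form evaluates to $\langle(\Delta-10)h,h\rangle_{L^2}=(\lambda-10)\,\|h\|_{L^2}^{2}<0$ on the nonzero TT direction $h$ (respectively $S$), so the second variation is negative there and $g$ is linearly unstable, as claimed.
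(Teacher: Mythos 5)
Two remarks before the substance. First, the paper you are reading does not prove this statement at all: it is imported with a citation from Semmelmann--Wang--Wang \cite{SemmelmannWang}, and the paper only combines it with Theorem~\ref{thmA} to conclude that nearly K\"ahler metrics carrying Rarita--Schwinger fields are linearly unstable. So your proposal must be measured against the proof in \cite{SemmelmannWang}, whose strategy you do reproduce in outline: Verbitsky's theorem, the $\SU(3)$-equivariant passage from harmonic $2$- and $3$-forms to trace-free symmetric tensors $h\in\Gamma(\Sym^+_0M)$ and $S\in\Gamma(\Sym^-M)$, and a comparison of the Lichnerowicz Laplacian with the Hermitian Laplace operator in the spirit of \cite{MoroianuSemmelmann2}, \cite{MoroianuSemmelmann}. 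Your reduction of instability to finding a TT tensor $h$ with $\langle(\Delta-10)h,h\rangle_{L^2}<0$, the identification of the standard Laplacian $\Delta$ with the Lichnerowicz Laplacian on symmetric $2$-tensors, and the verification that $h$ and $S$ are divergence-free (via $\delta h=-J\delta\varphi$, respectively via the argument for $\delta S=0$ in the proof of Theorem~\ref{thmA}) are all sound.

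The genuine gap is that you never prove the inequality $\lambda<10$: you write ``granting $\lambda<10$ in both cases'' and stop. That inequality is the entire quantitative content of the theorem, and nothing in the shape of the machinery forces it. Indeed, exactly the same machinery, run in the proof of Theorem~\ref{thmB}, produces TT tensors that are infinitesimal Einstein deformations, i.e.\ that sit \emph{exactly} at the neutral threshold $\Delta h=10h$ (they correspond to $E(12)$ on forms); so whether a given $\SU(3)$-construction lands strictly below, at, or above $10$ depends on the explicit constants in the Weitzenb\"ock comparisons, and these must be computed --- this computation, yielding Lichnerowicz eigenvalues strictly below $10$ for the tensors attached to \emph{harmonic} forms, is precisely what \cite{SemmelmannWang} carry out and what is missing here. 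Two subsidiary points would also need repair before that computation can even be set up as you describe. (i) Your appeal to Schur's Lemma is incorrect: $q(\bar{R})$ is \emph{not} a single scalar on each irreducible $\SU(3)$-bundle; what is true, and what you actually need, is that $q(\bar{R})$ commutes with parallel $\SU(3)$-equivariant isomorphisms such as $h\mapsto g(Jh\cdot,\cdot)$ and $S\mapsto S_{\star}\psi^+$, because $\bar{R}$ takes values in $\mathfrak{su}(3)$; pointwise equivariance (hence scalarity) would require the Hermitian curvature operator to be a multiple of the Casimir, which fails in general. (ii) You assert that after the cancellations one is left with an eigentensor equation $\Delta h=\lambda h$; a priori one only obtains an expression for $\Delta h$ with components in several summands of the decomposition of $S_{1/2}\otimes TM$, and one must show the extraneous components vanish --- the paper itself flags the analogous step ($\delta S=0$) as the delicate point in its proof of Theorem~\ref{thmA}. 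As it stands, then, your text is a correct reduction plus a plan, not a proof.
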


Since $\dim \ker Q=b_3(M)$ is obtained in our main result (Theorem \ref{thmA}), we know that $g$ is linearly unstable in the above sense on six-dimensional complete strict nearly K\"{a}hler manifolds with non-zero Rarita-Schwinger fields.

\subsection{Infinitesimal deformation of Killing spinors}
For detailed information about Killing spinor variations, please refer to \cite{Wang} and other sources. 

We define an infinitesimal deformation of Killing spinors for a general $n$-dimensional Riemannian manifold $(M,g)$. Before that, we provide a tensor $\Psi^{(\beta, \kappa_0)}$ in $\Gamma(T^{\ast}M \otimes S_{1/2})$ for $\beta : TM \rightarrow TM$ and $\kappa_0$ in $\Gamma(S_{1/2})$:
\[
\Psi^{(\beta, \kappa_0)} (X) = \beta(X) \cdot \kappa_0.
\]

\begin{definition} \label{def: infinitesimal deformation of the Killing spinor} 
We call a pair $(\beta, \kappa)$ an infinitesimal deformation of the Killing spinor $\kappa_0$ with constant $c$ if $\beta: TM \rightarrow TM$ symmetric and $\kappa$ in $\Gamma(S_{1/2})$ satisfy
 \begin{enumerate}
 \renewcommand{\labelenumi}{(\roman{enumi})}
 \item $\kappa$ is a Killing spinor with constant $c$. 
 \item $\tr \beta = \delta\beta=0$.
 \item $D_{TM} \Psi^{(\beta, \kappa_0)} = nc\Psi^{(\beta, \kappa_0)}$.
 \end{enumerate}
\end{definition}

We now consider a compact 6-dimensional strict nearly K\"{a}hler manifold $(M^6, g, J)$ with scalar curvature $\scal = 30$ except the round sphere $S^6$. In this case, of course, $n=6$ and $c=\frac{1}{2}$ in Definition \ref{def: infinitesimal deformation of the Killing spinor}. Furthermore, there is a one-to-one correspondence between a Killing spinor (modulo constant rescaling) and a nearly K\"{a}hler structure (cf. \cite{Grunewald}). So, the Killing spinor $\kappa_0$ is corresponding to the nearly K\"{a}hler structure $(g,J)$. For (i) in the above definition, we take $\kappa$ as a constant multiplication of $\kappa_0$ because the space of the Killing spinors with the Killing number $\frac{1}{2}$ is one-dimensional. We adjust the symbols to the ones we used in Section \ref{Rarita Schwinger fields on nearly Kahler}. First, since $\beta$ is an endomorphism of $TM$, it is expressed locally as ${\alpha_1}^{(i)} \otimes e_i$. Furthermore, since $\beta$ is symmetric, it becomes $H$ defined in $(\ref{H})$. This means that $w$, defined in $(\ref{w})$, is zero. Therefore, (ii) in the above definition is rewritten as
\[
H \in \Sym_0 M, \; \delta H=0.
\]
The condition (iii) becomes
\begin{equation} \label{D_{TM}=3}
D_{TM} ({\alpha_1}^{(i)} \cdot \kappa_0 \otimes e_i) = 3{\alpha_1}^{(i)} \cdot \kappa_0 \otimes e_i.
\end{equation}
Projecting the left-hand side of the equation $(\ref{D_{TM}=3})$ onto the bundle $\wedge^1 M \otimes \wedge^1 M$, we get
\[
(A_{e_j} \otimes \Id)\bar{\nabla}_{e_j}({\alpha_1}^{(i)} \otimes e_i) - h.
\]
Projecting the right-hand side of the equation $(\ref{D_{TM}=3})$, we get 
\[
3h + 3S.
\]
Putting the above equations together, we have
\[
(A_{e_j} \otimes \Id)\bar{\nabla}_{e_j}({\alpha_1}^{(i)} \otimes e_i) =  4h + 3S.
\]
Continuing with the calculation similar to the one in Section \ref{Rarita Schwinger fields on nearly Kahler}, we find that
\[
\delta\sigma = -8\varphi, \; d\varphi = -\frac{3}{2}\sigma, \; \varphi \in \Omega^{(1,1)}_0 M, \; \sigma \in \Omega^{(2,1)+(1,2)}_0 M.
\]
This is equivalent to
\[
\Delta \varphi = 12 \varphi, \; \delta \varphi = 0, \; \sigma = - \frac{2}{3}d\varphi, \; \varphi \in \Omega^{(1,1)}_0 M,
\]
so we obtain the following result. Note that the differential of a co-closed promitive $(1,1)$-form is contained in the space of primitive $(2,1)+(1,2)$-forms. In the case of the round sphere $S^6$, the same arguments hold and the dimension of the space of the Killing spinors is eight.

\let\temp\thetheorem
\renewcommand{\thetheorem}{\ref{thmB}}

\begin{theorem}
Let $(M^6, g, J)$ be a compact strict nearly K\"{a}hler 6-manifold. Then the space of the infinitesimal deformations of Killing spinors is isomorphic to the direct sum of the space of the primitive co-closed $(1,1)$-eigenforms of the Laplace operator for the eigenvalue 12 and the space of the Killing spinors with constant $\frac{1}{2}$.
\end{theorem}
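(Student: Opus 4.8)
The plan is to observe that, in Definition \ref{def: infinitesimal deformation of the Killing spinor}, the three defining conditions on a pair $(\beta,\kappa)$ decouple: condition (i) constrains only the spinor $\kappa$, whereas conditions (ii) and (iii) constrain only the endomorphism $\beta$, since the reference Killing spinor $\kappa_0$ is held fixed. As all three conditions are linear, the space of infinitesimal deformations is a vector space that splits as the direct sum of the solution space for $\kappa$ and the solution space for $\beta$. Condition (i) is precisely the statement that $\kappa$ is a Killing spinor with constant $\tfrac12$, so this factor equals $K_+$. It therefore remains only to identify the space of admissible $\beta$ with the eigenspace $E(12)$.

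For the $\beta$-factor I would use the computation carried out immediately before the statement. Writing $\beta$ locally as ${\alpha_1}^{(i)} \otimes e_i$, the symmetry and trace-freeness in (ii) force $w=0$ and $H=\beta \in \Sym_0 M$, so that $\beta = h + S$ with $h \in \Sym^+_0 M$ and $S \in \Sym^- M$. Running the projection argument from the proof of Theorem \ref{thmA} on the eigenvalue equation $(\ref{D_{TM}=3})$ --- projecting onto the summands of $(\ref{irrer decomp S_{1/2} otimes TM})$ and applying Propositions \ref{lem:herm} and \ref{prop; action 2-tensor} --- reduces (ii) and (iii) to the system $\delta\sigma = -8\varphi$, $d\varphi = -\tfrac32\sigma$ with $\varphi = g(Jh\,\cdot,\cdot) \in \Omega^{(1,1)}_0 M$ and $\sigma = S_\star\psi^+ \in \Omega^{(2,1)+(1,2)}_0 M$. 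Eliminating $\sigma$ through $\sigma = -\tfrac23 d\varphi$ and combining the two relations then yields the equivalent system $\Delta\varphi = 12\varphi$, $\delta\varphi = 0$, $\varphi \in \Omega^{(1,1)}_0 M$.

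This furnishes a linear map $\beta \mapsto \varphi$ from admissible endomorphisms to $E(12)$, and I would check it is an isomorphism. It is injective because $h$ and $\varphi$ determine each other under the isomorphism $\Sym^+_0 M \cong \wedge^{(1,1)}_0 M$, while the $\Sym^-$ part $S$ is pinned down by $\sigma = -\tfrac23 d\varphi$; thus $\varphi = 0$ forces $h=0$ and $S=0$. For surjectivity one runs the construction in reverse: given $\varphi \in E(12)$ one recovers $h$, sets $\sigma := -\tfrac23 d\varphi$, notes that the differential of a co-closed primitive $(1,1)$-form is a primitive $(2,1)+(1,2)$-form so that $\sigma$ lies in the correct bundle and defines $S$, and then verifies that $\beta = h+S$ satisfies the full system and hence conditions (ii) and (iii). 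Combining the two factors gives the asserted isomorphism with $E(12)\oplus K_+$.

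The step I expect to be the main obstacle is precisely this converse direction, as in Theorem \ref{thmA}: one must confirm that the reconstructed $\beta$ satisfies the original tensorial conditions rather than merely the reduced scalar system, which requires tracking in which summand of $(\ref{irrer decomp S_{1/2} otimes TM})$ each term lives so that $\delta S = 0$ and the type constraints are genuinely recovered. A final bookkeeping point is that the statement includes the round sphere $S^6$: there the $\beta$-analysis is identical, and the only change is that the spinor factor $K_+$ is eight-dimensional rather than one-dimensional.
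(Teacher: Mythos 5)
Your proposal is correct and takes essentially the same approach as the paper: both decouple condition (i) (which yields the $K_+$ factor) from conditions (ii)--(iii), write $\beta = h+S$ with $w=0$, project the eigenvalue equation onto the irreducible decomposition of $S_{1/2}\otimes TM$ using Proposition \ref{lem:herm} and Proposition \ref{prop; action 2-tensor}, and reduce to the system $\delta\sigma=-8\varphi$, $d\varphi=-\tfrac{3}{2}\sigma$, equivalently $\Delta\varphi=12\varphi$, $\delta\varphi=0$, $\sigma=-\tfrac{2}{3}d\varphi$, with the same remark that $d$ of a co-closed primitive $(1,1)$-form is a primitive $(2,1)+(1,2)$-form and that on $S^6$ the spinor factor becomes eight-dimensional. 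The only difference is presentational: you spell out the injectivity/surjectivity check of $\beta\mapsto\varphi$ (including the recovery of the type constraints and $\delta S=0$ in the converse direction), which the paper leaves implicit.
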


\let\thetheorem\temp
\addtocounter{theorem}{-1}

We denote by $E(\lambda)$ the $\lambda$-eigenspace of $\Delta$ restricted to the space of primitive co-closed $(1,1)$-forms. Besides the round sphere $S^6$, Moroianu, Nagy, and Semmelmann \cite[Theorem 4.1]{MoroianuNagySemmelmann} showed that the space of the infinitesimal deformations of nearly K\"{a}hler structures is $E(12)$, which is a part of the space of the essential infinitesimal Einstein deformations $E(2) \oplus E(6) \oplus E(12)$ \cite[Lemma 5.2]{MoroianuSemmelmann}. As described in the paper \cite{Grunewald}, the space of the Killing spinors (modulo constant rescaling) is one-to-one correspondence with the space of the nearly K\"{a}hler structures except $S^6$. So, the space of the infinitesimal deformations of Killing spinors (up to a constant) and the infinitesimal deformations of nearly K\"{a}hler structures are consistent. Thus, Theorem \ref{thmB} is a reproof of the result of \cite{MoroianuNagySemmelmann} through the Killing spinor.

\section*{Acknowledgement}
We wish to offer our immeasurable gratitude to Professor Yasushi Homma for useful discussions, advices, and providing us with the topic. This paper is a part of the outcome of research performed under a Waseda University Grant for Special Research Projects (Project number: 2020C-614).

\end{document}